\newcommand{\bI}{\mathbf I}
\newcommand{\bn}{\mathbf n}
\newcommand{\bp}{\mathbf p}
\newcommand{\bq}{\mathbf q}
\newcommand{\bw}{\mathbf w}
\newcommand{\bx}{\mathbf x}
\newcommand{\by}{\mathbf y}
\newcommand{\eoc}[1]{$\text{eoc}_{\texttt{#1}}$}
\newcommand{\R}{\mathbb R}
\newcommand{\wt}{{\bw_T}}
\newcommand{\wte}{{\bw_T^e}}
\newcommand{\G}[2]{\Gamma^{#1}_{#2}}
\newcommand{\norm}[1]{\Vert #1 \Vert}
\renewcommand*{\dot}[1]{%
   \accentset{\mbox{\large\bfseries .}}{#1}}
\newcommand{\stab}{\rho_n}
\newcommand{\stabold}{\rho_{n-1}}
\newcommand{\consist}{\mathcal{E}_C^n}
\newcommand{\interpol}{\mathcal{E}_I^n}
\newcommand{\err}{\mathbb{E}}
\newcommand{\eps}{\varepsilon}
\newcommand{\V}{\mathcal V}
\newcommand{\T}{\mathcal T}
\newcommand{\Div}{\operatorname{\rm div}}
\newcommand{\nablaG}{\nabla_\Gamma}
\newcommand{\divG}{{\mathop{\,\rm div}}_{\Gamma}}
\newcommand{\gradG}{\nabla_{\Gamma}}
\newcommand{\rr}{\mathbb{R}}
\newcommand{\Gs}{\mathcal{G}} 
\newif\ifarxiv
\newcommand{\numbf}[1]{{\underline{\num{#1}}}}
\newcommand{\numQ}[1]{\num[round-precision=2,round-mode=places, scientific-notation=false]{#1}}
\renewcommand{\O}{\mathcal{O}}
\newcounter{ass}
\newcommand{\asslabel}{}
\newcommand{\ifasslabel}[1]{}
\begin{document}
	\title{A finite element method for Allen--Cahn equation on deforming  surface}

\author{
Maxim Olshanskii\thanks{Department of Mathematics, University of Houston, Houston, Texas 77204 (molshan@math.uh.edu).}
\and Xianmin Xu\thanks{ LSEC,ICMSEC,
  NCMIS, Academy of Mathematics and Systems Science, Chinese Academy of Sciences, Beijing 100190, China (xmxu@lsec.cc.ac.cn).
}\and Vladimir Yushutin\thanks{Department of Mathematics, University of Maryland, College Park, Maryland 20742 (yushutin@umd.edu).}
}

\maketitle
\begin{abstract}
The paper studies an Allen--Cahn-type equation defined on a time-dependent  surface as a model of phase separation with order--disorder transition in a thin material layer. By a formal inner-outer expansion, it is shown that the limiting behavior of the solution is a geodesic mean curvature type flow in reference coordinates. A  geometrically unfitted finite element method, known as a trace FEM, is considered for the numerical solution of the equation.  The paper provides  full stability analysis and convergence analysis that accounts for interpolation errors and an approximate recovery of the geometry.
\end{abstract}

\begin{keywords}
  Allen--Cahn, surface PDEs, evolving surfaces, Trace FEM, geodesic mean curvature flow
\end{keywords}

\begin{AMS}
   	65M60, 58J32
\end{AMS}

\section{Introduction}
Phase separation  may happen in thin material layers such as   polymer films, lipid bilayers, binary alloy interfaces or biophotonic nanostructures. One example of such essentially 2D phenomenon is the lipid rafts formation in a multi-component  plasma membrane, while the membrane is advected by an extracellular fluid  flow and exhibit tangential motion due to the membrane lateral fluidity~\cite{simons1997functional,VEATCH20033074}. In this and some other applications the thin layer is compliant so that a continuum based model represents it  by a surface underdoing radial and lateral deformations.
Motivated by these examples we adopt the model of Allen and Cahn~\cite{allen1979microscopic} to describe the phase evolution on a surface with a prescribed material motion. The model uses a  smooth indicator function $u$ (order parameter) to characterize ordered\,/\,disordered states and a transition region.  This renders the model as a diffusive interface approach.

Before applying a numerical method to the derived Allen--Cahn type equation, the paper addresses well-posedness of the problem and the limiting behaviour of $u$ when the width of the transition region tends to zero. The latter is done here by extending the standard technique of inner (with respect to the transition layer) and outer expansions for the solution. In a steady domain the asymptotic behaviour is well known to be the mean curvature flow~\cite{evans1992phase} for the limit sharp interface  (or the mean geodesic curvature flow for surfaces~\cite{elliott2010modeling}).  In the case of the deforming surface $\Gamma(t)$ we obtain that the for each time $t$ the  material velocity of the sharp interface is defined by (instantaneous) geodesic mean curvature, which can be also seen as a mean curvature type flow in reference coordinates.

The main focus of the paper is a finite element analysis of the  Allen--Cahn type equation posed on an evolving surface.
The paper introduces a geometrically unfitted finite element method, known as a trace FEM~\cite{ORG09,olshanskii2016trace}, to discretize  the problem. The method considers a sharp representation of $\Gamma(t)$ (e.g., as a zero level of a level set  function) and uses degrees of freedom tailored to an ambient tetrahedral mesh, which can be chosen independent of the surface and its evolution.
The numerical approach benefits from the embedding $\Gamma(t)\subset\mathbb{R}^3$  by using tangential calculus to define surface differential operators. Tangential calculus assumes an extension of functions from  $\Gamma(t)$ to its (narrow) neighborhood. The latter is also used here to define a time-stepping numerical procedure following the ideas from~\cite{lehrenfeld2018stabilized,olshanskii2017trace}.
We prove stability and error estimates for the numerical method. The error analysis accounts for all types of discretization errors, e.g., those resulting from the time stepping, polynomial interpolation and the geometric consistency  error due to a possible inexact integration over $\Gamma(t)$. Besides the difficulties associated with time-dependent domains and the treatment of tangential quantities, the current analysis is complicated by the following factor.
While in a stationary domain (e.g., in a non-compliant material surface) the  Allen--Cahn model defines the evolution of the order parameter as the $L^2$-gradient flow of the Ginzburg--Landau energy functional, such minimization property fails to hold for time-dependent domains.

Computational methods and numerical analysis for Allen--Cahn type equations in planar and volumetric  domains  have received much attention in the literature, see  e.g.~\cite{Shen_Yang2010,guillen2014second,liu2015stabilized,hou2017discrete,huang2019adaptive} among recent publications.
At the same time, numerical treatment of surface Allen-Cahn equations is a relatively recent topic in the literature. Work has been done on developing   a closest point finite difference  method \cite{kim2017finite}, a mesh free method \cite{mohammadi2019numerical},  and finite elements methods (FEMs) \cite{dziuk2007surface,elliott2010modeling,yushutin2019computational,xiao2020unconditionally} as the most versatile and mathematically sound approach. Among those papers \cite{elliott2010modeling} allows deformation of the surface due to line tension forces and applies a (fitted) FEM  on a triangulated surface. The authors of \cite{yushutin2019computational}  applied unfitted (trace) FEM to phase-field models on stationary surfaces. Numerical analysis for equations governing phase separation on (evolving) surfaces is largely an open topic. Another two closely related studies  \cite{elliott2015evolving,yushutin2020numerical} deal with FEMs for the Cahn--Hilliard equation on a time-dependent surface: in~\cite{elliott2015evolving} the authors develop numerical analysis of a fitted FEM  and \cite{yushutin2020numerical} applies the trace FEM. {\color{black} Trace FEM is a member of a large family of geometrically unfitted finite element methods along with such as XFEM~\cite{moes1999finite}, immersed interface FEM~\cite{li1998immersed} and cutFEM~\cite{cutFEM}, the most closely related approach.
For the later approach, other authors considered stabilized  space–time formulations~\cite{grande2014eulerian,hansbo2016cut} and semi-Lagrangian type methods~\cite{hansbo2015characteristic} to integrate in time PDEs posed on evolving surfaces.}

The outline of the paper is as follows. In Section \ref{s:form}, we introduce the model.  The weak formulation of the problem and its well-posedness are discussed in Section \ref{s:weak}. An asymptotic behaviour of the solution to the problem is studied in Section \ref{s:asympt}. After necessary preliminaries, the numerical method is introduced in Section \ref{s:discretization}. Error and stability analyses are carried out in Section \ref{s:Analysis}. Section \ref{s:Numerics} supplements the paper with numerical examples.

\section{Allen--Cahn equation on an evolving surface}\label{s:form}
Consider a material surface $\Gamma(t)\subset\mathbb{R}^3$, $t\in[0,T]$, with density distribution $\rho:\Gamma(t)\to \mathbb{R}$. Assume $\Gamma(t)$ is passively advected by a smooth velocity field $\bw=\bw(\bx,t)$, $\bx\in\R^3$, and for all times $\Gamma(t)$ stays smooth, closed  ($\partial \Gamma(t) =\emptyset$), connected and orientable. We are interested in a phase separation process on $\Gamma(t)$ with a transition between order and disorder states. The state of matter at $\bx\in\Gamma(t)$ is characterized by a smooth indicator function $u(\bx,t)$, $u\,:\,\Gamma(t)\to [-1, 1]$, with $u\simeq -1$ in the less ordered phase and $u\simeq 1$ in the more ordered phase.

To describe an evolution of phases, we follow the classical approach of Allen and Cahn~\cite{allen1979microscopic}  and assume that an instantaneous change in the order per  area $s(t)\subset \Gamma(t)$ is proportional to  the variation of the  total specific free energy for $s(t)$:
\begin{equation}\label{law}
\frac{d}{dt}\int_{s(t)}\rho u\,ds = -\int_{s(t)}c_k\frac{\delta e(u)}{\delta u}\,ds,
\end{equation}
where $c_k$ is a positive kinetic coefficient, and the  energy density is given by
\[
e(u)=\rho\left(\frac{1}{\epsilon^{2}} \\
F(u) +|\nabla_\Gamma u|^2\right),
\]
where $\nabla_\Gamma u$ is the tangential gradient of $u$ ({\color{black} cf. definition in \eqref{aux259}}).
The energy of a homogeneous state $F(u)$ has a double--well form of Ginzburg--Landau potential to allow for phase separation, and $\epsilon$ is a characteristic width of a transition region between phases. Further we choose $F(u)=(1-u^2)^2/4$.

Application of the surface Reynolds transport theorem (also known as the Leibniz formula for evolving surfaces, e.g., \cite{elliott2010surface}) to \eqref{law} gives
\begin{equation*}
\int_{s(t)}(\dot{(\rho u)} + \rho u \divG \bw)\,ds = -\int_{s(t)}c_k\frac{\delta e(u)}{\delta u}\,ds.
\end{equation*}
By $\dot{f}$ we denote the material derivative of a smooth  function $f$ defined on $\Gamma(t)$ for $t\in[0,T]$ and $\divG$ stands for the surface divergence ({\color{black} cf. \eqref{aux259}}).
Computing the functional derivative of $F(u)$ with respect to $u$, $f(u)=F'(u)$, and varying $s(t)$ for any fixed $\Gamma(t)$, $t\in[0,T]$ leads to the Allen--Cahn equation on the deforming surface:
\begin{equation}\label{Allen1}
\dot{(\rho u)} + \rho u \divG \bw = -\rho c_k({\epsilon}^{-2}f(u) -\divG(\rho\gradG{}u))\quad\text{on}~\Gamma(t),~t\in(0,T).
\end{equation}
Likewise, the conservation of mass and the surface Reynolds transport theorem yield the identity
\begin{equation}\label{mass}
\dot{\rho} + \rho \divG \bw=0\quad\text{on}~\Gamma(t).
\end{equation}
Thanks to \eqref{mass}, the surface  Allen--Cahn equation \eqref{Allen1} can be written in the equivalent form
\begin{equation}\label{Allen2}
\dot{u} = -c_k\left(\epsilon^{-2}f(u) -\frac{1}{\rho}\divG(\rho\gradG{}u)\right)\quad\text{on}~\Gamma(t),~t\in(0,T).
\end{equation}
The equation should be complemented with the initial condition $u(\bx, 0)= u_0(\bx)$, $\bx\in\Gamma(0)$, describing the state of matter at time $t=0$.

Equations  \eqref{Allen1} or \eqref{Allen2} are solved for the order parameter $u$ with given $\rho$ satisfying \eqref{mass}.
In this paper, we assume $\rho=\mbox{const}$. In practice, this assumption is plausible for surfaces with initially homogeneous density distribution and exhibiting small  or area-preserving deformations.
The latter is characterised by $\divG \bw=0$ and is a valid assumption for several types of biological membranes, such as lipid mono- or bi-layers~\cite{lipowsky1991conformation,seifert1997configurations}.   Due to this assumption, the model (slightly) simplifies to the following system of equation and initial condition:
\begin{equation}\label{transport}
\left\{
\begin{split}
   \dot{u}& = -{	\epsilon^{-2}}f(u) +\Delta_\Gamma u\quad\text{on}~\Gamma(t),~t\in(0,T), \\
     u& =u_0\quad\text{on}~\Gamma(0),
\end{split}
\right.
\end{equation}
 $\Delta_\Gamma$ is the Laplace--Beltrami operator and we set $c_k=1$. 

We close this section by noting the analogy between Allen--Cahn equations \eqref{Allen1} or \eqref{Allen2} and  those
describing the compressible two-phase fluid flow  (in the Euclidean space)  with phase transition; see~\cite{blesgen1999generalization}.

\subsection{Preliminaries}\label{s:prelim}
We need more precise assumptions for the evolution of $\Gamma(t)$.  To formulate them, assume that $\bw$ and  $\Gamma_0$ are sufficiently smooth such that for all $y \in \Gamma_0$ the ODE system
\[
  \Phi(y,0)=y, \quad \frac{\partial \Phi}{\partial t}(y,t)= \bw(\Phi(y,t),t), \quad t\in [0,T],
\]
has a unique solution $x:=\Phi(y,t) \in \Gamma(t)$, which defines the Langrangian mapping $\Phi:\Gamma_0\to \Gamma(t)$. The  inverse mapping is given by $\Phi^{-1}(x,t):=y \in \Gamma_0$, $x \in \Gamma(t)$. With the help of  $\Phi$, we define the bijection $\Psi$ between $\Gamma_0 \times [0,T]$, with $\Gamma_0:=\Gamma(0)$,and the space-time manifold
\[\Gs: = \bigcup\limits_{t \in (0,T)} \Gamma(t) \times \{t\},\quad  \Gs\subset \Bbb{R}^{4} \]
as follows
\begin{equation} \label{defF}
 \Psi:\,   \Gamma_0 \times [0,T] \to \Gs, \quad ~\Psi(y,t):=(\Phi(y,t),t).
\end{equation}
\emph{We assume $\Psi$ is a $C^2$-diffeomorphism between these manifolds}.

For $\Gamma(t)$,  consider a signed distance function $\phi(t)$ (positive in the exterior and negative in the interior of $\Gamma(t)$). Let $\O_\delta(\Gs)$ be a tubulate $\delta$-neighborhood of $\Gamma$:
\[
\O_\delta(\Gs):=\{(x,t)\in\mathbb{R}^4\,:\,|\phi(x,t)|\le\delta\}.
\]
 The above assumptions imply that for sufficiently small $\delta>0$ it holds $\phi\in C^2(\O_\delta(\Gs))$ and  the normal projection onto $\Gamma(t)$, $\bp:\O_\delta(\Gs)\to \Gamma(t)$ is well defined for each $t\in[0,T]$. We fix such $\delta$ and further often skip it in notation $\O(\Gs)=\O_\delta(\Gs)$. Likewise, we shall write $\O_\delta(\Gamma(t))$ to denote a $\delta$-neighborhood of $\Gamma(t)$ in $\mathbb{R}^3$ and $\O(\Gamma(t))=\O_\delta(\Gamma(t))$ for $\delta$ as above. For every fixed $t\in[0,T]$, the gradient of $\phi$ defines in $\O(\Gamma(t))$ normal direction to $\Gamma(t)$ with $\bn=\nabla \phi$ being the outward normal vector on $\Gamma(t)$, {\color{black}here and below $\nabla$ is spacial gradient in $\mathbb{R}^3$}.

For a smooth $u$ defined on $\Gs$, a function $u^e$ denotes the extension of $u$ to $\O(\Gs)$ along spatial normal directions to the level-sets of $\phi$, it holds
$\nabla u^e\cdot\nabla \phi=0$ in $\O(\Gs)$, $u^e=u$ on $\Gs$, and $u^e(\bx)=u^e(\bp(\bx))$ in $\O(\Gs)$. The extension $u^e$ is smooth once $\phi$  and $u$ are both smooth.
 Further, we use the same notation $u$ for the function on $\Gs$ and its extension to  $\O(\Gs)$.

Once a function $u$ on
  $\Gs$ is identified with its extension on $\O(\Gs)$, one can write the surface differential operators arising in the model, in terms of tangential calculus:
  \begin{equation}\label{aux259}
  \nabla_\Gamma u =(\bI-\bn\times\bn^T)\nabla u,\quad {\Div}_\Gamma\bw=\mbox{tr}\left((\bI-\bn\times\bn^T)\nabla \bw\right),\quad
  \Delta_\Gamma u= {\Div}_\Gamma\nabla_\Gamma u.
  \end{equation}
Furthermore,  one can expand the intrinsic surface quantity $\dot{u}$ in Eulerian terms:
\begin{equation}\label{aux264}
\dot{u}= \frac{\partial u}{\partial t} + \bw \cdot \nabla u.
\end{equation}
Identity \eqref{aux264} allows us to rewrite  \eqref{transport} as follows:
\begin{equation}
\left\{
\begin{split}
\frac{\partial u }{\partial t}+\bw\cdot\nabla u&=-\epsilon^{-2}f(u)+\Delta_{\Gamma} u \quad\text{on}~~\Gamma(t),\\
\nabla u\cdot\nabla\phi&=0 \quad\text{in}~~\O(\Gamma(t))
\end{split}~~t\in (0,T],
\right.
\label{transport1}
\end{equation}
subject to  $u =u_0$ {on} $\Gamma(0)$.
This formulation will be useful for the design of a finite element method in Section~\ref{s:discretization}.
We note that  equalities \eqref{aux259}--\eqref{aux264} are  valid for any smooth extension (not necessarily a normal one).

\section{Asymptotic analysis} \label{s:asympt}
In this section, we  study an asymptotic behaviour of $u$ solving \eqref{transport}
when $\eps$ goes to zero. Our analysis follows the inner-outer expansion arguments, which are now standard for phase-field equations defined on Euclidean domains in $\mathbb{R}^d$, $d=2,3$, \cite{caginalp1986analysis,caginalp1988dynamics,pego1989front} and also has been used recently to study  sharp interface limits of two phase-field models  defined on surfaces~\cite{elliott2010surface,oConnor2016cahn}.

We assume $t\ge t_0$ sufficiently large 
such that the separation of phases happened and  $u$ exhibits an inner layer (diffuse interface) of width $O(\eps)$. Consider the  central line  of the diffuse interface defined as the zero level of $u$, $\gamma(t):=\{\bx\in\Gamma(t)\, :\, u(\bx,t)=0\}$. 
 For all $t\in(t_0,T)$ we assume that $\gamma(t)$ is a smooth closed curve on $\Gamma(t)$. The interior and exterior  domains with respect to $\gamma(t)$ are denoted by  $\Gamma^\pm(t):=\{\bx\in\Gamma(t)\, :\, \pm u(\bx,t)>0\}$. 

{\it Outer expansion.} Denote by $u^{\pm}$ the order parameter restricted to $\Gamma^{\pm}$.
Following, e.g.,  \cite{caginalp1988dynamics} we assume that away from the interfacial layer around  $\gamma(t)$, both $u^{\pm}$
can be expanded in the form
\begin{equation}\label{outer}
u^{\pm}(\bx,t)=u^{\pm}_0(\bx,t)+\eps u^{\pm}_1(\bx,t)+\cdots,\quad
\end{equation}
with smooth $u^{\pm}_k(\bx,t)$.
 Substituting \eqref{outer}  into \eqref{transport} and using
the Taylor expansion for $f(u)$,
$
f(u^{\pm})=f(u_0^{\pm})+\eps f'(u_0^{\pm}) u^{\pm}_1+\cdots,
$
yield
\begin{align*}
&\big(\dot u_0^{\pm}(\bx,t)+\eps \dot u_1^{\pm}(\bx,t)+\cdots\big)-\Delta_{\Gamma} \big(u_0^{\pm}(\bx,t)+\eps u_1^{\pm}(\bx,t)+\cdots\big) +\eps^{-2} \big(f(u_0^{\pm})+\eps f'(u_0^{\pm}) u_1+\cdots\big)=0. 
\end{align*}
Considering the leading order term with respect to $\eps\to 0$  gives
$f(u_0^{\pm})=0.$
Therefore, away from the  layer it holds
\begin{equation}\label{e:outer0}
u_0^{\pm}(\bx,t)=\pm 1.
\end{equation}

{\it Inner expansion.} 
Denote by $d_\gamma$  the signed geodesic distance on $\Gamma(t)$ for any fixed $t$, and $\pm d_{\gamma}(\bx)>0$ for $\bx\in \Gamma^\pm$.
Consider the inner layer $U_\eps(\gamma(t))$, which we define as an $O(\eps)$ neighborhood of $\gamma(t)$:
$U_\eps(\gamma(t)):=\{\bx\in \Gamma(t)\,:\,|d_\gamma(\bx)|\le c_0\,\eps\}$, with sufficiently large $c_0$, independent of $\eps$. We assume $\eps$ to be sufficiently small such that the geodesic closest point projection $\bq(\bx):\,U_\eps(\gamma(t))\to\gamma(t)$ is well-defined so that $(\bq(\bx),d_{\gamma}(\bx))$ is the local (time dependent) coordinate system in $U_\eps(\gamma(t))$.  In $U_\eps(\gamma(t))$ the conormal directions are defined by the tangential vector field  $\mathbf{m}=\nabla_{\Gamma}d_{\gamma}$. For $\bx\in\gamma(t)$, $\mathbf{m}(\bx)$ is a unit conormal of $\gamma(t)$  pointing into $\Gamma^+(t)$.

Following \cite{caginalp1988dynamics,pego1989front}, we introduce a fast variable in $U_\eps(\gamma(t))$ by re-scaling the coordinate in the conormal direction $\xi=\frac{d_{\gamma}(\bx)}{\eps}$, and represent  $u(\bx,t)$ as
\begin{equation}
u(\bx,t)=\tilde{u}(\bx,\xi,t)\quad\text{for}~\bx\in U_\eps(\gamma(t)),
\end{equation}
where $\tilde{u}(t):\,U_\eps(\gamma(t))\times (-c_0,c_0)\to\mathbb{R}$ is 
defined as $\tilde{u}(\by,\xi,t):=u(\bx,t)$ for $\bx\in U_\eps(\gamma(t))$ such that $\bq(\by)=\bq(\bx)$ and $\xi=d_{\gamma}(\bx)/{\color{black}\eps}$.
Given the new variables  we find  the identities:
\begin{equation}\label{aux353}
\nabla_{\Gamma} u=\nabla^{\bx}_{\Gamma} \tilde{u}+\eps^{-1} \partial_{\xi}\tilde{u}\,\mathbf{m}, \quad
\Delta_{\Gamma} u=\Delta^{\bx}_{\Gamma} \tilde{u}
+\eps^{-2}\partial_{\xi\xi}\tilde{u}+\eps^{-1}\partial_{\xi}\tilde{u}\Delta_{\Gamma} d_{\gamma},
\end{equation}
where for the second equality we used   $\mathbf{m}\cdot\nabla^{\bx}_{\Gamma}  \tilde{u}=0$ and $\mbox{div}_\Gamma\mathbf{m}=\Delta_{\Gamma} d_{\gamma}$ {\color{black}(same identities in terms of fast and slow surface variables are deduced by slightly different arguments in~\cite{oConnor2016cahn} and \cite{garcke2016coupled})}.
Denoting by $\dot{\tilde{u}}$ the material derivative of $\tilde{u}(\by,\xi,t)$ 
we also compute
\begin{equation}\label{aux366}
\dot{{u}}=\dot{\tilde{u}}+\eps^{-1}\partial_{\xi}\tilde{u}\,\dot{d_\gamma}.
\end{equation}

We assume that $\tilde{u}$ in the layer can be expanded
$$
\tilde{u}(\bx,\xi,t)=\tilde{u}_0(\bx,\xi,t)+\eps\tilde{u}_1(\bx,\xi,t)+\cdots,
$$
with smooth $\tilde{u}_0,\tilde{u}_1,\dots$.
Substituting this in \eqref{transport}, using \eqref{aux353}--\eqref{aux366}  and  Taylor expansion for $f(\tilde{u})$,
i.e. $f(\tilde{u})=f(\tilde{u}_0)+\eps f'(\tilde{u}_0)\tilde{u}_1+\cdots $,
we find that $O(\eps^{-2})$ order terms give
\begin{equation}\label{e:inner0}
-\partial_{\xi\xi}\tilde{u}_0+f(\tilde{u}_0)=0.
\end{equation}
Accounting for  $O(\eps^{-1})$ order terms we obtain
\begin{equation}\label{e:inner1}
\partial_{\xi} \tilde{u}_0\left(\dot{d_{\gamma}} -\Delta_{\Gamma} d_{\gamma}\right)
-2\nabla_{\Gamma} d_{\gamma}\cdot\nabla_{\Gamma}(\partial_{\xi}\tilde{u}_0)-\partial_{\xi\xi}\tilde{u}_1+f'(\tilde{u}_0)\tilde{u}_1=0.
\end{equation}
To proceed we need conditions on $\tilde{u}_0$ for $\xi\to \infty$ (which can be allowed if $\eps\to0$).

{\it Matching conditions.} We now have a representation of the solution in the narrow layer around $\gamma(t)$ and another representation valid away from the interface. Following  \cite{caginalp1988dynamics,pego1989front} we consider  matching conditions between these two representations. We formulate the conditions below, while details of derivation can be found in  \cite{garcke2006second}.
Denote $u^{\pm}_k(\bx,t)=\lim\limits_{s\rightarrow\pm 0}u^{\pm}_k(\bx+s\mathbf{m},t)$ when $\mathbf{x}\in\gamma(t)$ and $\eps\to0$,  and similar we define $\nabla_{\Gamma} u^{\pm}_0(\bx,t)$ for  $\mathbf{x}\in\gamma(t)$. The matching conditions read:
\begin{align}
 \tilde{u}_0(\bx,\xi,t)&=u_0^{\pm}(\bx,t), && \hbox{as }\xi\rightarrow\pm \infty,~\eps\xi\rightarrow0 \label{e:match1}\\
\tilde{u}_1(\bx,\xi,t)&=u_1^{\pm}(\bx,t)+\xi \mathbf{m}\cdot\nabla_{\Gamma}u_0^{\pm}(\bx,t), &&\hbox{as }\xi\rightarrow\pm \infty,~\eps\xi\rightarrow0  \label{e:match2}\\
\partial_{\xi}\tilde{u}_1(\bx,\xi,t)&=\mathbf{m}\cdot\nabla_{\Gamma}u_0^{\pm}(\bx,t), &&\hbox{as }\xi\rightarrow\pm \infty,~\eps\xi\rightarrow0 .  \label{e:match3}
\end{align}

From condition~\eqref{e:match1} and  \eqref{e:outer0} it follows that
\begin{equation}\label{aux417}
\lim_{\xi\rightarrow\pm\infty}\tilde{u}_0=\pm 1.
\end{equation}
This and  $\tilde{u}_0(\bx,0,t)=0$ supplies the equation~\eqref{e:inner0} with necessary boundary conditions. For $f(\tilde{u}_0)=-\tilde{u}_0+\tilde{u}_0^3$ it provides us with the unique  solution
\begin{equation*}
\tilde{u}_0(\bx,\xi,t)=\tanh(\xi/\sqrt{2}).
\end{equation*}
In particular, we see that $\tilde{u}_0$ does not depend on $(\bx,t)$. This simplifies  equation  \eqref{e:inner1}  to
\begin{equation*}
\partial_{\xi} \tilde{u}_0\left(\dot{d_{\gamma}} -\Delta_{\Gamma} d_{\gamma}\right)
-\partial_{\xi\xi}\tilde{u}_1+f'(\tilde{u}_0)\tilde{u}_1=0.
\end{equation*}
We multiply the above identity by $\partial_\xi \tilde{u}_0$  and  integrate it for $\xi\in(-\infty,\infty)$.
This leads to
\begin{equation}\label{e:inner1n}
\sigma \left(\dot{d_{\gamma}}-\Delta_{\Gamma} d_{\gamma}\right)
-\int_{-\infty}^{\infty}\!\!(\partial_{\xi}\tilde{u}_0)[\partial_{\xi\xi}\tilde{u}_1-f'(\tilde{u}_0)\tilde{u}_1]d\xi=0.
\end{equation}
where $\sigma:=\int_{-\infty}^{\infty}(\partial_\xi \tilde{u}_0)^2 d\xi$ is a positive constant that can be interpreted as  interface tension coefficient. Now let us take a further look into matching conditions \eqref{e:match1}--\eqref{e:match3}. The first one  implies $f'(\tilde{u}_0)=f(\tilde{u}_0)=0$ for $\xi\to\pm\infty$. Since $\mathbf{m}\cdot\nabla_{\Gamma}u_0^{\pm}(\bx,t)=0$, from  \eqref{e:match2} and \eqref{e:match3} we also see that $|\tilde{u}_1|$ is bounded and  $\partial_{\xi}\tilde{u}_1=0$ for $\xi\to\pm\infty$. Using these limit values for the integration by parts, we obtain
\[
\int_{-\infty}^{\infty}\!\!\partial_{\xi}\tilde{u}_0[\partial_{\xi\xi}\tilde{u}_1-f'(\tilde{u}_0)\tilde{u}_1]d\xi
=\int_{-\infty}^{\infty}\!\!\partial_{\xi}\tilde{u}_0\partial_{\xi\xi}\tilde{u}_1-\partial_{\xi}f(\tilde{u}_0)\tilde{u}_1 d\xi =\int_{-\infty}^{\infty}\!\![\partial_{\xi\xi} \tilde{u}_0-f(\tilde{u}_0)]\partial_{\xi}\tilde{u}_1d\xi =0,
\]
where for the last equality we use~\eqref{e:inner0}.
Equation~\eqref{e:inner1n} reduces to
\begin{equation}\label{e:dynDistn}
\dot{d_{\gamma}}-\Delta_{\Gamma} d_{\gamma}=0.
\end{equation}
Consider the limiting interface $\gamma(t)$ as the zero level of the order-parameter as $\eps\to0$.
Equation \eqref{e:dynDistn} for the signed distance function  describes the dynamics of $\gamma(t)$ on the passively evolving material surface $\Gamma(t)$.
The quantity  $\Delta_{\Gamma} d_{\gamma}=\kappa_g$ is
the geodesic curvature of $\gamma(t)$ on $\Gamma(t)$ satisfying that $\kappa_g(\bx)$ is positive when $\Gamma(t)^{-}$ is convex
at $\bx$. While $\dot{d_{\gamma}}=0$ corresponds to the passive evolution along material trajectories, $\dot{d_{\gamma}}=\kappa_g$ can be seen as an active evolution or a mean curvature type flow in the reference (Lagrangian) coordinates.  The (tangential) geometric evolution of the sharp interface is defined by the conormal velocity of $\gamma(t)$ given by  $\mathbf{m}\cdot\mathbf{w}-\kappa_g$.

\section{Weak formulation and well-posedness}\label{s:weak}
Consider a slightly more general problem:
\begin{align}\label{transport_ST}
\dot{u}  +\alpha u- \Delta_{\Gamma} u +\eps^{-2}{f(u)}=0\quad\text{on}~~\Gamma(t),\\
u(\bx,0)=u_0 \quad\text{for}~~\bx \in \Gamma(0),
\end{align}
with an $L^\infty(\Gs)$ function $\alpha$, and let $\alpha_{\infty}:=\|\alpha\|_{L^\infty(\Gs)}$. Following \cite{Shen_Yang2010} we consider a modified double-well potential ${F}$ such that for some $M > 1$
  	\begin{align}
 	{F}'(x)={f}(x)=
 \begin{cases}
({3M^2-1})x-2M^3,\quad x>M,\\
 x(x^2-1), \qquad\qquad \quad x\in[-M,M],\\
({3M^2-1})x+2M^3,\quad x<-M.
 \end{cases}
 	 \end{align}
Function $f(x)$ satisfies the following growth conditions  with $L=3M^2-1$
	\begin{align}\label{growth}
	|{f}(x)|\leq L |x|,\quad f(x)x\ge -x,
	\end{align}
and Lipschitz  condition:
		\begin{align}\label{variation}-1\leq\frac{{f}(x)-{f}(y)}{ x-y}\leq{}L,\quad\forall x,y \in \mathbb{R},~  x\neq y.
		\end{align}

Given our assumptions on the evolution of $\Gamma(t)$, the scalar product
$$
(u,v)_0=\int_0^T\int_{\Gamma(t)} u v \,ds dt
$$
induces a norm $\|\cdot\|_0$ on $L^2(\Gs)$ equivalent to the standard $L^2(\Gs)$-norm.
Besides standard  Lebesgue spaces $L^q(\Gs)$, $1\le q\le\infty$, and Sobolev spaces $H^k(\Gs)$, $k=1,2,\dots$, we need the following analogues of standard Bochner spaces:
\begin{align*}
H&=\{u\in L^2(\Gs)\,:\, \|\nablaG u\|_{0}<\infty\},\quad\text{with}~ (u,v)_H=(u,v)_{0}+(\nablaG u, \nablaG v)_{0},\\
L^{\infty}_{1} &=\{u\in L^\infty(\Gs)\,:\, \mbox{ess}\sup\limits_{t\in[0,T]}\|\nablaG u\|_{L^2(\Gamma(t))}\le\infty\},\\
W&=\{u\in L^{\infty}_{1} \,:\, \dot{u} \in L^2(\Gs)\},\quad   \|u\|^2_W =\|u\|^2_{L^{\infty}_{1}}+\|\dot{u}\|^2_{L^2(\Gs)}.
\end{align*}
From~\cite{olshanskii2014eulerian,elliott2015evolving} we know that $H$ is a Hilbert space and smooth functions are everywhere dense in $H$ and $W$.

Exploiting the smoothness  properties of the mapping $\Psi$ between $\Gamma_0\times(0,T)$ and $\Gs$ one shows (cf. \cite{olshanskii2014eulerian,elliott2015evolving}) that the following isomorphisms hold algebraically and topologically: $H\cong L^2(0,T; H^1(\Gamma{}_0))$ 
and $W\cong L^\infty(0,T; H^1(\Gamma{}_0))\cap H^1(0,T;L^2(\Gamma{}_0))$.

We consider the following weak formulation of \eqref{transport_ST}: For $u_0\in H^1(\Gamma_0)$, find $u\in {W}$ such that $u(0)=u_0$ and
\begin{equation}\label{weak}
  (\dot{u},v)_0  +(\alpha u+\eps^{-2}{f(u)},v)_0+(\nabla_\Gamma u,\nabla_\Gamma v)_0=0,\quad\text{for all } v\in H.
\end{equation}

\begin{lemma}\label{Lem:wp} The week formulation \eqref{weak} is well posed.
\end{lemma}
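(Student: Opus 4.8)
The plan is to reduce \eqref{weak} to a standard abstract semilinear parabolic problem on the \emph{fixed} reference surface $\Gamma_0$ and then to solve it by a Faedo--Galerkin argument. Using the Lagrangian map $\Phi(\cdot,t):\Gamma_0\to\Gamma(t)$, I set $\hat u(y,t):=u(\Phi(y,t),t)$, so that the material derivative $\dot u$ becomes the ordinary time derivative $\partial_t\hat u$. Under this pullback each of the three forms in \eqref{weak} transforms into its counterpart over $\Gamma_0$ carrying a Jacobian/metric factor depending on $t$ through $\Phi$; because $\Psi$ is a $C^2$-diffeomorphism these factors are bounded and bounded away from zero uniformly in $t$. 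Hence \eqref{weak} is equivalent to
$$
\frac{d}{dt}(\hat u,\hat v)_{m(t)}+a(t;\hat u,\hat v)+b\big(t;\alpha\hat u+\eps^{-2}f(\hat u),\hat v\big)=0,\qquad \hat v\in H^1(\Gamma_0),
$$
with a symmetric, uniformly bounded, uniformly $H^1(\Gamma_0)$-coercive (modulo a zeroth-order term, since $\Gamma_0$ is closed) bilinear form $a(t;\cdot,\cdot)$, and time-regular mass forms $(\cdot,\cdot)_{m(t)}$, $b(t;\cdot,\cdot)$. The isomorphisms $H\cong L^2(0,T;H^1(\Gamma_0))$ and $W\cong L^\infty(0,T;H^1(\Gamma_0))\cap H^1(0,T;L^2(\Gamma_0))$ quoted in the excerpt let me read off existence in $W$ from existence for this reference problem. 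The decisive structural fact is that the truncation built into $f$ makes it \emph{globally} Lipschitz via \eqref{variation}; this keeps the argument essentially linear and would even permit a Banach fixed-point alternative.

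I would then take $\{\varphi_i\}$ to be, say, the Laplace--Beltrami eigenfunctions on $\Gamma_0$ and seek $\hat u_N(t)=\sum_{i\le N}c_i(t)\varphi_i$ solving the projected equations. This is an ODE system for $c(t)$ whose right-hand side is Lipschitz in $c$ (the mass matrix is invertible and smooth in $t$, and $f$ is Lipschitz), so a local-in-time solution exists by Picard--Lindel\"{o}f. Global existence follows from the first a priori estimate: testing with $\hat v=\hat u_N$ and using the one-sided bound $f(x)x\ge -x^2$ contained in \eqref{variation} (as $f(0)=0$) together with $\|\alpha\|_{L^\infty(\Gs)}=\alpha_\infty$, an evolving-surface identity for $\tfrac{d}{dt}\|\hat u_N\|_{L^2(\Gamma(t))}^2$ and Gronwall's inequality yield a bound in $L^\infty(0,T;L^2)\cap L^2(0,T;H^1)$ uniform in $N$.

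To land in $W$ rather than merely the Lions space, I would derive a second estimate by testing with $\hat v=\partial_t\hat u_N$: coercivity of $a$ controls $\tfrac{d}{dt}a(t;\hat u_N,\hat u_N)$ up to terms involving $\partial_t a$ (bounded since $\Psi\in C^2$) and the growth bounds \eqref{growth}, so that $u_0\in H^1(\Gamma_0)$ propagates to a uniform bound on $\|\hat u_N\|_{L^\infty(0,T;H^1)}$ and $\|\partial_t\hat u_N\|_{L^2(0,T;L^2)}$, i.e.\ on $\|\cdot\|_W$. Passing to the limit $N\to\infty$, weak-$\ast$ and weak limits handle the linear terms, Aubin--Lions compactness gives strong $L^2(\Gs)$ convergence of $\hat u_N$, and the global Lipschitz continuity of $f$ upgrades this to $f(\hat u_N)\to f(\hat u)$ in $L^2(\Gs)$, identifying the limit as a solution of \eqref{weak} in $W$ with $u(0)=u_0$.

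Uniqueness closes the argument: for two solutions I test \eqref{weak} with the difference $w=u_1-u_2\in H$, apply the Reynolds identity $\tfrac12\tfrac{d}{dt}\|w\|_{L^2(\Gamma(t))}^2=(\dot w,w)_0+\tfrac12\int_{\Gamma(t)}w^2\divG\bw\,ds$, and use the lower bound $\tfrac{f(u_1)-f(u_2)}{u_1-u_2}\ge-1$ from \eqref{variation} to absorb the nonlinear contribution; Gronwall then forces $w\equiv0$. The main obstacle is not the nonlinearity but the geometry: keeping the pulled-back bilinear form uniformly coercive and correctly accounting for the extra zeroth-order terms produced by integration by parts and the transport theorem on the \emph{moving} surface. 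All of these are controlled quantitatively by the assumed $C^2$-regularity of $\Psi$ (equivalently boundedness of $\bw$ and $\divG\bw$), so once the reference-domain reformulation is in place the estimates are routine.
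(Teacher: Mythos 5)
Your proposal is correct, and its first half coincides with the paper's own proof: both arguments begin by pulling \eqref{weak} back along the Lagrangian map to the fixed reference surface $\Gamma_0$, where the material derivative becomes $\partial_t\hat u$ and all forms acquire uniformly bounded, uniformly positive metric factors (this is the paper's \eqref{weak_ref}). From there the routes genuinely diverge. The paper does no further PDE work: it recasts \eqref{weak_ref} as an abstract evolution equation \eqref{aux507} with operators $M$, $B$, $\gamma$, checks positivity of $M$, the coercivity-modulo-$L^2$ bound \eqref{aux511} for $B$, and the growth and semi-monotonicity bounds \eqref{aux515} for $\gamma$ (precisely the structural facts about the truncated potential that you isolate), and then invokes the abstract well-posedness result \cite[Theorem~2.1]{schimperna2000abstract}. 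You instead prove everything from scratch by Faedo--Galerkin: Picard--Lindel\"{o}f for the projected ODE system (valid because the truncated $f$ is globally Lipschitz by \eqref{variation}), a first energy estimate, a second estimate from testing with $\partial_t\hat u_N$ to reach the $W$-regularity, Aubin--Lions compactness plus Lipschitz continuity of $f$ to pass to the limit, and a Gronwall argument for uniqueness. What the paper's route buys is brevity, since all the hard analysis is delegated to the cited framework; what yours buys is a self-contained proof in which the $L^\infty(0,T;H^1)\cap H^1(0,T;L^2)$ bound and the uniqueness mechanism are explicit rather than hidden in the citation. Two small points to tighten: writing the mass term as $\tfrac{d}{dt}(\hat u,\hat v)_{m(t)}$ introduces an extra zeroth-order term $(\hat u,\hat v\,\partial_t\mu)$ relative to the genuine pullback $(\partial_t\hat u,\hat v)_{m(t)}$ of $(\dot u,v)_0$, so either fold that term into your form $b$ or state the equation with $\partial_t\hat u$ directly; and your one-sided bound $f(x)x\ge -x^2$ (derived from \eqref{variation} and $f(0)=0$) is indeed the correct inequality --- the paper's \eqref{growth} states $f(x)x\ge -x$, which fails, e.g., at $x=-1/2$ and is evidently a typo for the bound you use.
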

\begin{proof} A standard approach to the analysis of Allen-Cahn type equations solvability  is based on the energy minimization principle, which does not hold in the case of equations posed in the evolving domain.
Hence we consider a different argument. For $\hat{u}=u\circ\Psi\in L^\infty(0,T; H^1(\Gamma{}_0))\cap H^1(0,T;L^2(\Gamma{}_0))$, $v\in L^2(0,T; H^1(\Gamma{}_0))$ we rewrite \eqref{weak} in the reference cylinder $\widehat{S}=\Gamma{}_0\times (0,T)$:
\begin{equation}\label{weak_ref}
  \int_0^T\int_{\Gamma_0}\left\{(\hat{u}_t+\alpha \hat{u}+\eps^{-2}f(\hat{u}))\hat{v}+(\nablaG F)^{-T}\nabla_\Gamma \hat{u}:(\nablaG F)^{-T}\nabla_\Gamma \hat{v}\right\}\mu d\hat{s}dt=0,
\end{equation}
for all $\hat{v}\in L^2(0,T; H^1(\Gamma{}_0))$. Here $\mu\in C^1(\overline{\widehat{S}})$, $\nablaG \Psi\in C^1(\overline{\widehat{S}})^{3\times 3}$ are such that $\mu>0$ and $\nablaG \Psi(\nablaG\Psi)^{T}$ is uniformly bounded on $\widehat{S}$. Therefore, the problem \eqref{weak_ref} can be formulated to fit an abstract framework from~\cite{schimperna2000abstract}:
Find $\hat{u}\in L^\infty(0,T; H^1(\Gamma{}_0))\cap H^1(0,T;L^2(\Gamma{}_0))$ such that $\hat{u}(0)=u_0\in  H^1(\Gamma{}_0)$ and
\begin{equation}\label{aux507}
M\hat{u}'+ B\hat{u}+\gamma(\hat{u})=0\quad\text{in}~H^{-1}(\Gamma{}_0)\quad\text{for a.e.}~t\in[0,T],
\end{equation}
where operators $M\in\mathcal{L}(L^2(\Gamma_0))$, $B\in\mathcal{L}(H^{1}(\Gamma{}_0),H^{-1}(\Gamma{}_0))$  and $\gamma: H^{1}(\Gamma{}_0)\to L^2(\Gamma{}_0)$ are defined by the identities
\begin{align*}
 M \hat{w}=\mu\hat{w},\quad
\langle B\hat{u},\hat v\rangle=\int_{\Gamma_0}(\nablaG \Psi)^{-T}\nabla_\Gamma \hat{u}:(\nablaG \Psi)^{-T}\nabla_\Gamma \hat{v}\mu d \hat{s},\quad
\gamma(\hat{u}) =(\alpha \hat{u}+\eps^{-2}f(\hat{u}))\mu,
\end{align*}
for all $\hat{w}\in L^2(\Gamma_0)$, $ \hat{u},  \hat{v}\in H^{1}(\Gamma{}_0)$.
It is easy to verify that $M$ is positive definite, $B$ is  such that
\begin{equation}\label{aux511}
\langle B v,v\rangle\ge \hat c_1\|v\|^2_{H^{1}(\Gamma{}_0)}-\hat c_2\|v\|^2_{L^2(\Gamma{}_0)},\quad\text{with some}~ \hat c_1>0,~\hat c_2\ge0,
\end{equation}
and $\gamma$ is continuous and, {thanks to \eqref{growth}},
\begin{equation}\label{aux515}
\begin{split}
\|\gamma(v)\|^2_{L^2(\Gamma{}_0)}&\le \hat C_1+\hat C_2\|v\|^2_{H^{1}(\Gamma{}_0)},\quad\text{with some}~ \hat C_1>0,~\hat C_2>0,\\
(\gamma(v)-\gamma(w),v-w)_{L^2(\Gamma{}_0)}&\ge -\hat C_0\|v-w\|^2_{L^2(\Gamma{}_0)},\quad\text{with some}~ \hat C_0>0,
\end{split}
\end{equation}
for all $v,w\in  H^1(\Gamma{}_0)$.

Problem~\eqref{aux507}--\eqref{aux515} is well posed (\cite[Theorem~2.1]{schimperna2000abstract}) and so is \eqref{weak}.
\end{proof}

\section{Discretization method} \label{s:discretization}
To set up a numerical method, one needs to define a time-stepping procedure, spatial discretization approach and a practical way of handling surface integrals and derivatives. The approach taken here benefits from the embedding of $\Gamma(t)$ in $\mathbb{R}^3$ for all $t\in[0,T]$, which allows to use tangential calculus in an ambient (bulk) functional space (rather than computations in intrinsic time-dependent surface coordinates). The bulk space supports well-defined traces of functions on $\Gamma(t)$ and functions from the bulk space are further approximated in a standard time-independent finite element space. Our time-stepping procedure exploits an observation made earlier in section~\ref{s:prelim} that a function on $\Gamma(t)$ can be identified with its smooth extension to a neighborhood of the surface. Finally, the geometry representation is based on the implicit definition of $\Gamma_h(t)$, an approximation of $\Gamma(t)$, as a zero level of a  finite element function.
Altogether, this approach resembles the trace finite element method for partial differential equations on evolving surfaces introduced and analyzed in \cite{olshanskii2017trace,lehrenfeld2018stabilized} for the diffusion problem on $\Gamma(t)$. The approach is also known as a hybrid  FD in time -- trace FEM in space, since a (standard) finite difference scheme is adopted for treating the time dependence and an unfitted finite element method is used in space.

We start with explaining the time-stepping method.

\subsection{Time-stepping scheme}
Consider a uniformly distributed time nodes  $t_n=n\Delta t$, $n=0,\dots,N$, with the uniform time step $\Delta t=T/N$.
It is crucial to assume that $\Delta t$ is sufficiently small that
\begin{equation}\label{ass1}
  \Gamma(t_n)\subset\O( \Gamma(t_{n-1}))\quad~n=1,\dots,N. ~ 
\end{equation}
Recall that $\O( \Gamma(t))$ is a neighborhood of the surface, where the normal projection on $\Gamma(t)$ is well defined, and so are the extensions of surface quantities.

Using the notation $u^n$ for an approximation to $u(t_n)$, and $\phi^n = \phi(t_n)$, we consider the following  semi-implicit first order method  for the Eulerian formulation \eqref{transport1} of the Allen-Cahn surface problem:
\begin{equation}
\left\{
\begin{split}
(1 + \beta_s \Delta t)\frac{u^n-u^{n-1}}{\Delta t}+\bw^n\cdot\nabla u^n 
- \Delta_{\Gamma} u^n&=-\eps^{-2} f(u^{n-1})\quad\text{on}~~ \Gamma(t_n),\\
\nabla u^{n}\cdot\nabla\phi^n&=0\quad\text{in}~~\O(\Gamma(t_n)).
\end{split}\right.
\label{e:ImEuler}
\end{equation}
Here $\beta_s>0$ is a stabilization parameter as suggested in \cite{Shen_Yang2010} to allow the explicit treatment of the non-linear part on the right-hand side of \eqref{e:ImEuler}. This leads to  a linear problem with respect to  $u^n$ on each time step. More important is that the function $u^{n-1}$ is well-defined on $\Gamma(t_n)$ through its extension. Indeed, if one considers \eqref{e:ImEuler} with index shifted $n\to n-1$, i.e. eq. \eqref{e:ImEuler} written for the previous time step, then the second equation defines the extension of $u^{n-1}$ to $\O( \Gamma(t_{n-1}))$ and because of \eqref{ass1} it defines an extension to $\Gamma(t_n)\subset\O( \Gamma(t_{n-1}))$. Therefore, all terms in \eqref{e:ImEuler} on the current step  are well defined.

For a finite element method, we shall need the integral formulation of \eqref{e:ImEuler}, where we enforce the second equation weakly, as a constraint: Any  smooth $u^{n}$ solving \eqref{e:ImEuler} satisfies
\begin{multline} \label{e:conttraceFEM1}
\int_{\Gamma(t_n)}\!\!\big( (1 + \beta_s \Delta t) \frac{u^n-u^{n-1}}{\Delta t}+\bw^n\cdot\nabla u^n
\big) v\, ds+
\int_{\Gamma(t_n)} \! \nabla_{\Gamma}u^n \,\! \cdot\!\nabla_{\Gamma}v \, ds\\ + \rho\int_{\O(\Gamma(t_n))}(\nabla u^{n}\cdot\nabla\phi^n)(\nabla v\cdot\nabla\phi^n)\,dx=-\eps^{-2}\!\! \int_{\Gamma(t_n)}\!\! f(u^{n-1})v\, ds ,
\end{multline}
for all sufficiently smooth test functions $v:\O(\Gamma(t_n))\to\mathbb{R}$. $\rho>0$ is an augmentation parameter for the normal extension condition, {\color{black}and $\bw_T=\bw-(\bw\cdot\bn)\bn$ is the tangential part of $\bw$}.

We need  the integration by parts identity:
\begin{equation} \label{e:intpartc}
\begin{aligned}
 	\int_{\Gamma(t)}(\bw\cdot\nabla u)v\,ds 
 & = \frac12 \int_{\Gamma(t)} (\wt\cdot\nabla_\Gamma u v - \wt\cdot\nabla_\Gamma v u)\,ds
   -\frac12 \int_{\Gamma(t)} (\Div_\Gamma\wt) u v\,ds
\end{aligned}
\end{equation}
for sufficiently smooth $u,v$ such that $\bn\cdot\nabla u=\bn\cdot \nabla v = 0$ (recall that $\bn=\nabla\phi$ on $\Gamma$).

\subsection{Finite element method} \label{s:fulldisc}
To reduce the repeated use of generic but unspecified constants, further in the paper we write $x\lesssim y$ to state that the inequality $x \le cy$ holds for quantities $x$, $y$ with a constant $c$, which is independent of the mesh parameters $h$, $\Delta t$, time
instance $t_n$, and the position of $\Gamma$ and $\Gamma_h$ in the bulk mesh. Similarly we give sense to $x\gtrsim y$. 

Consider a family $\{\T_h\}_{h>0}$ of shape-regular consistent triangulations of the bulk domain $\Omega$, with $\max\limits_{T\in\T_h}\mbox{diam}(T) \le h$. The bulk triangulation supports a standard finite element space of piecewise polynomial
continuous functions of a fixed degree $k\ge1$:
\begin{equation} \label{eq:Vh}
V_h=\{v_h\in C(\Omega)\,:\, v_h|_S\in P_k(S), \forall S\in \mathcal{T}_h\}.
\end{equation}
We next approximate the  sign distance function $\phi$ by a finite element distance function $\phi_h$ {\color{black} of degree $q$, i.e. $\phi_h\in V_h$ for $k=q$}, such that
\begin{equation}\label{phi_h}
\|\phi-\phi_h\|_{L^\infty(\O(\Gamma(t)))}+ h \|\nabla(\phi-\phi_h)\|_{L^\infty(\O(\Gamma(t)))}\lesssim \,h^{q+1},\quad \forall~t\in[0,T],
\end{equation}
where we need to assume $\phi \in C^{q+1}(\O(\Gs))$.
Following \cite{lehrenfeld2018eulerian}, we also assume that $\nabla \phi_h(\bx,t) \neq 0$ in  $\O(\Gamma(t))$, $t\in[0,T]$, and that on every time interval $I_n=[t_{n-1},t_n]$ there holds
\begin{subequations}\label{phi_hb}
\begin{align}\label{phi_hba}
\| \phi_h^{n-1}-\phi_h^n\|_{L^\infty(\Omega)} & \lesssim\,\Delta t \|\bw\cdot\bn\|_{\infty,I_n}, \\
\label{phi_hbb}
\|\nabla \phi_h^{n-1}-\nabla \phi_h^n\|_{L^\infty(\Omega)} & \lesssim\,\Delta t\left( \|\bw\cdot\bn\|_{\infty,I_n} +  \|\nabla (\bw\cdot\bn)\|_{\infty,I_n}\right), \text{ for } n=1,\dots,N,
\end{align}
\end{subequations}
where  $\phi_h^n(\bx) = \phi_h(\bx,t_n),~n=0,\dots,N$, and
$
\norm{v}_{\infty,I_n}
       := \sup\limits_{t\in I_n} \norm{v}_{L^\infty(\Gamma(t))},
$
for $v$ defined on $\Gamma(t)$.

We now introduce the ``discrete'' surfaces $\G{n}{h}$ as the zero level of $\phi_h^n$,
\[
\G{n}{h}:=\{\bx\in\rr^3\,:\,\phi_h^n(\bx)=0\}.
\]
Thanks to \eqref{phi_h} it approximates the original  surface $\Gamma$ in the following sense
\begin{equation}\label{eq:dist}
\operatorname{dist}(\G{n}{h},\Gamma(t_h)) = \max_{x\in\G{n}{h}} |\phi^n(\bx)|
= \max_{x\in\G{n}{h}} |\phi^n(\bx) - \phi_h^n(\bx)| \leq \Vert \phi^n - \phi_h^n \Vert_{L^\infty(\Omega)} \lesssim h^{q+1}.
\end{equation}
For the normal vector to $\G{n}{h}$, $\bn_h^n=\nabla\phi_h^n/|\nabla\phi_h^n|$, and the extended normal vector to $\Gamma(t_n)$, $ \bn^{n} = \nabla\phi^n$, the following consistency bound follows from \eqref{phi_h}:
\begin{equation}\label{eq:normals}
  \vert \bn_h^n(\bx) - \bn^n(\bx) \vert \lesssim | \nabla \phi_h^n(\bx) - \nabla \phi^n(\bx) | \lesssim h^q,\quad \bx \in \G{n}{h}.
\end{equation}

For practical reasons, the finite element method does not look for an extension of the discrete solution to the whole neighborhood $\O(\Gs)$. Instead  it  provides an extension to a  \emph{narrow} band around   $\G{n}{h}$.  For each $n$, the extension band consists   of all tetrahedra on a $\delta_n$ distance from $\G{n}{h}$, for
\begin{equation} \label{e:delta}
  \delta_n := c_\delta \| \bw\cdot\bn\|_{\infty,I_n}~\Delta t
\end{equation}
and $c_\delta\geq 1 $, an $O(1)$ mesh-independent constant. More precisely, we define the {\color{black} mesh-dependent} narrow band as
\begin{equation*}
{\O}_h(\G{n}{h})= {\bigcup}\left\{\overline{S}\,:\,  S\in\mathcal{T}_h\,:| \phi_h^n(\bx) |\le\delta_n  \text{ for some } \bx \in S\right\}.
\end{equation*}
We also need a subdomain of ${\O}_h(\G{n}{h})$  only consisting of tetrahedra intersected by $\G{n}{h}$,
\begin{equation*}
{\O}_\Gamma(\G{n}{h}):=
{\bigcup}\left\{ \overline{S}\in\mathcal{T}_h\,:\, S\cap\G{n}{h}\neq\emptyset\right\}.
\end{equation*}
Since $\mbox{dist}(\G{n}{h},\Gamma(t_n))\lesssim h^{q+1}$, the narrow band width $\delta_n$ and $h$ can be assumed small enough such that
\begin{equation}\label{neigh_cond}
  {\O}_h(\G{n}{h})\subset\O(\Gamma(t_n)). 
\end{equation}
{\color{black} This and \eqref{e:delta}}
implies the restriction on the time step of the form
\begin{equation} \label{eq:conddtkappa}\asslabel
  \Delta t \leq c_0 (c_\delta \norm{\bw\cdot\bn}_{\infty,I_n})^{-1}=O(1),  ~ n = 1,\dots,N,
\end{equation}
with some $c_0$ sufficiently small, but independent of $h$, $\Delta t$ and $n$.
{\color{black} On one time step from $t_{n-1}$ to $t_n$, the surface  may travel up to $\Delta t\|\bw\cdot\bn\|_{\infty,I_n}$ distance in normal directions, which is thus the maximum distance from $\Gamma_h^n$ to $\Gamma_h^{n-1}$.
Therefore}, $c_\delta$  can be taken  sufficiently large, but independent of $h$, such that
\begin{equation}\label{cond1} \asslabel
{\O_\Gamma}(\G{n}{h})\subset{\O}_h(\G{n-1}{h}).
\end{equation}
{\color{black} To see this, one applies \eqref{e:delta} to determine $\delta_{n-1}$, which in turn defines ${\O}_h(\G{n-1}{h})$.}
This condition is the discrete analog of \eqref{ass1} and it is essential for the well-posedness of the finite element formulation below.

Next we define test and trial  finite element spaces of degree $m\ge1$ as restrictions of the time-independent bulk space $V_h$, $k=m$, on all tetrahedra from ${\mathcal{O}}(\G{n}{h})$:
\begin{equation}\label{eq:testtrial}
V_h^n=\{v \in C({\O}_h(\G{n}{h}))\,:\, v\in P_m(S),~ \forall  S\in\mathcal{T}_h,  S\subset {\mathcal{O}}(\G{n}{h})\},\quad m\ge1.
\end{equation}
We further use $V_h^n$ as test and trial spaces in the integral formulation \eqref{e:conttraceFEM1}, where we use identity \eqref{e:intpartc} and replace $\Gamma(t_n)$ by $\G{n}{h}$, $\O(\Gamma(t_n))$ by ${\O}_h(\G{n}{h})$. The resulting FE formulation reads:
{For a given $u_h^0 \in V_h^0$ find  $u_h^n\in V_h^n$, $n=1,\dots,N$, solving}
\begin{multline}\label{e:traceFEM1}
\!\!\int\limits_{\G{n}{h}}\!\left\{\!\!(1+\beta_s\Delta{}t)\frac{u_h^n-u_h^{n-1}}{\Delta t} v_h + \frac12 \left(\wte\cdot\nabla_{\G{}{h}} u^n_h v_h - \wte\cdot\nabla_{\G{}{h}} v_h u_h^n -(\Div_{\G{}{h}}\wte) u_h^n v_h\right)\! \right\}\! ds_h\\ + \int\limits_{\G{n}{h}}\nabla_{\G{}{h}}u^n_h\cdot\nabla_{\G{}{h}}
v_h\, ds_h+\stab \int\limits_{{\O}_h(\G{n}{h})}(\bn_h^n\cdot\nabla u_h^n) (\bn_h^n\cdot\nabla v_h) d\bx =- \eps^{-2}\!\! \int\limits_{\G{n}{h}}\!\! f(u_h^{n-1}) v_h\, ds_h,
\end{multline}
{for all $v_h\in V_h^n$. Here $\bn_h=\nabla\phi_h^n/|\nabla\phi_h^n|$ in ${\O}_h(\G{n}{h})$, $\stab>0$ is a  parameter, $\bw^e(\bx)=\bw(\bp^n(\bx))$ is a lifted data on $\G{n}{h}$ from $\Gamma(t_n)$. The  terms  involving $u^{n-1}$ are well-defined thanks to condition \eqref{cond1}.}
With suitable restrictions on problem parameters the last term on the left-hand side of \eqref{e:traceFEM1} ensures the whole bilinear form is elliptic on $V_h^n$; see \eqref{coer}. Therefore, on each time step we obtain a FE solution defined in  ${\O}_h(\G{n}{h})$ (not just on $\G{n}{h}$ and this can be seen as an implicit extension procedure).
As discussed in many places in the literature, see, e.g. \cite{lehrenfeld2018stabilized}, this term also stabilizes the problem algebraically, i.e. the resulting systems of algebraic equations are well-conditioned independent on how the surface $\Gamma_h$ cuts through the ambient triangulation.

We finally note that an accurate integration  over $\G{n}{h}$ may be not feasible using standard quadrature rules for higher than second order surface representation, i.e. for  $q>1$. More sophisticated numerical integration techniques should be applied as discussed in the literature \cite{fries2015,lehrenfeld2015cmame,grande2018analysis,muller2013highly,saye2015hoquad,sudhakar2013quadrature}.

\section{Analysis of the finite element method} \label{s:Analysis}
In this section we address stability and error analysis of the finite element formulation \eqref{e:traceFEM1}.
For a proper control of the geometric error, the analysis  requires the following mild restriction on the mesh step,
\begin{equation}\label{cond4}\asslabel
h^{2q} \lesssim \Delta t.
\end{equation}
We recall that $q\ge 1$ is the degree of geometry approximation from \eqref{phi_h}.

We shall need the following two Lemmas from \cite{lehrenfeld2018stabilized}. The result of the first lemma allows the control of the $L^2$ norm of $v_h  \in V_h^n$ in the narrow band by its $L^2$ norm on $\Gamma_h$ and a term similar to the normal volume stabilization in \eqref{e:traceFEM1}.  While the second lemma provides control over the $L^2$ norm of the extension of a FE function on $\G{n}{h}$ by its values on $\G{n-1}{h}$. That lemma is essential for applying a Gronwall type argument later.
\begin{lemma}\label{lemcrucial}
  Assume  conditions \eqref{e:delta} and \eqref{eq:conddtkappa} are satisfied, then for any $v_h \in V_h^n$ it holds
\begin{equation}
    \label{fund1}
\|v_h \|_{{\O}_h(\G{n}{h})}^2  \lesssim (\delta_n + h) \|v_h \|_{\G{n}{h}}^2 +(\delta_n + h)^2 \|\bn_h^n \cdot \nabla v_h \|_{{\O}_h(\G{n}{h})}^2.
\end{equation}
\end{lemma}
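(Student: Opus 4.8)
The plan is to prove \eqref{fund1} by the classical device of passing to normal coordinates around $\G{n}{h}$ and applying the fundamental theorem of calculus in the co-normal direction, so that the value of $v_h$ at an interior band point is controlled by its trace on $\G{n}{h}$ plus a line integral of the stabilization quantity $\bn_h^n\cdot\nabla v_h$. The two factors $(\delta_n+h)$ and $(\delta_n+h)^2$ on the right-hand side will then emerge purely from the width of the band: one factor from integrating a surface quantity across the tube, and a second factor from the Cauchy--Schwarz estimate of the line integral.

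First I would record the geometric fact that the band is a thin tube. Since $\O_h(\G{n}{h})$ is a union of entire tetrahedra, each of diameter at most $h$ and each meeting $\{|\phi_h^n|\le\delta_n\}$, shape regularity gives $\O_h(\G{n}{h})\subset\{\,\bx:\ |\phi_h^n(\bx)|\lesssim\delta_n+h\,\}$; writing $W:=\delta_n+h$ for the effective half-width. Using $\nabla\phi_h^n\neq0$ with $|\nabla\phi_h^n|$ bounded above and below, together with the smallness built into \eqref{e:delta} and \eqref{eq:conddtkappa}, the gradient flow of $\phi_h^n$ furnishes a coordinate map $\bx\mapsto(\bp_h^n(\bx),r)$ onto $\G{n}{h}\times(-W,W)$, where $\bp_h^n(\bx)\in\G{n}{h}$ is the foot point and $r=\phi_h^n(\bx)$; on such a thin tube this map has Jacobian $J$ bounded above and below by mesh-independent constants, and the flow line joining $\bx$ to $\bp_h^n(\bx)$ stays inside $\O_h(\G{n}{h})$, so $\nabla v_h$ is defined along it.

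Next, along this flow line I write $v_h(\bx)=v_h(\bp_h^n(\bx))+\int_0^{r}(\bn_h^n\cdot\nabla v_h)\,|\nabla\phi_h^n|^{-1}\,d\tau$, which is legitimate because $v_h\in C(\O_h(\G{n}{h}))$ is Lipschitz across element faces; since $|\nabla\phi_h^n|\simeq1$, Cauchy--Schwarz gives $|v_h(\bx)|^2\lesssim|v_h(\bp_h^n(\bx))|^2+W\int|\bn_h^n\cdot\nabla v_h|^2\,d\tau$ with the integral taken along the flow line. I then integrate over $\O_h(\G{n}{h})$ in the normal coordinates. The first term is independent of $r$, so its integral equals $\|v_h\|_{\G{n}{h}}^2$ times $\int_{-W}^{W}\!J\,dr\lesssim W$, yielding the $(\delta_n+h)\|v_h\|_{\G{n}{h}}^2$ contribution. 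For the second term the outer factor $W$ from the Cauchy--Schwarz step combines with a Fubini argument---enlarging the inner $r$-range to $(-W,W)$ and swapping the order of integration contributes a further factor $W$---to give $\lesssim W^2\|\bn_h^n\cdot\nabla v_h\|_{\O_h(\G{n}{h})}^2$. Summing the two contributions and recalling $W\simeq\delta_n+h$ establishes \eqref{fund1}.

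The main obstacle is the geometric bookkeeping of the second paragraph rather than the analysis: because $\O_h(\G{n}{h})$ is assembled from whole tetrahedra it is not an exact level tube of $\phi_h^n$, so one must show, using shape regularity and that $\{|\phi_h^n|\le\delta_n\}$ already lies in $\O_h(\G{n}{h})$, both that its width is $O(\delta_n+h)$ and that the normal flow line from every band point to its foot point on $\G{n}{h}$ remains where $v_h$ and $\nabla v_h$ are defined. This is precisely where conditions \eqref{e:delta} and \eqref{eq:conddtkappa} enter: they keep the tube thin enough for $\bp_h^n$ to be a diffeomorphism onto $\G{n}{h}\times(-W,W)$ with two-sided Jacobian bounds, which is what makes the change of variables---and hence the whole estimate---uniform in $h$, $\Delta t$ and $n$.
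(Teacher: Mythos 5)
Your argument has a genuine gap, and it sits exactly where you locate the ``geometric bookkeeping'': the flow of the discrete field $\bn_h^n=\nabla\phi_h^n/|\nabla\phi_h^n|$ does \emph{not} furnish a coordinate map $\bx\mapsto(\bp_h^n(\bx),r)$ onto $\G{n}{h}\times(-W,W)$ with two-sided Jacobian bounds. Since $\phi_h^n$ is only a continuous piecewise polynomial, $\bn_h^n$ jumps across element faces; integral curves of such a discontinuous field can merge onto a face (sliding), or, in the opposite configuration, leave wedge-shaped regions of the band that are reached by \emph{no} flow line starting on $\G{n}{h}$, so the claimed parametrization misses a set of positive measure and its ``Jacobian'' degenerates there. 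Conditions \eqref{e:delta} and \eqref{eq:conddtkappa} only constrain the band width; they say nothing about the jumps of $\bn_h^n$ across faces, so no smallness of $\Delta t$ or $h$ repairs this.

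The gap is not a removable technicality, because your proof uses only that $v_h$ is Lipschitz and never uses that it is a finite element function, whereas the estimate \eqref{fund1} is \emph{false} for general Lipschitz functions. Concretely (2D cross-section, extruded in $z$): take $\phi=y$, a vertical mesh face at $\{x=0\}$, and $\phi_h^n=y+\alpha|x|$ with $\alpha\simeq h^q$, which is admissible under \eqref{phi_h}--\eqref{eq:normals}. Then $\bn_h^n\propto(-\alpha,1)$ for $x<0$ and $(\alpha,1)$ for $x>0$, and no flow line from $\G{n}{h}=\{y=-\alpha|x|\}$ enters the wedge $\{|x|<\alpha y\}$. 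The Lipschitz function $v=f(\alpha y-|x|)$, with $f(s)=0$ for $s\le 0$ and $f(s)=\min(s/\sigma,1)$ for $s>0$ and $\sigma$ small, satisfies $\bn_h^n\cdot\nabla v=0$ a.e.\ and $v\equiv0$ on $\G{n}{h}$, yet $\|v\|^2_{{\O}_h(\G{n}{h})}\gtrsim\alpha W^2>0$, while the right-hand side of \eqref{fund1} vanishes. Hence any correct proof must exploit $v_h\in V_h^n$. Note that the paper itself does not reprove the lemma but quotes it from \cite{lehrenfeld2018stabilized}; the standard argument there is close in spirit to yours but foliates the band by the \emph{exact} smooth normal field $\bn=\nabla\phi$ (whose flow map genuinely is a bi-Lipschitz change of variables), writes $\bn\cdot\nabla v_h=\bn_h^n\cdot\nabla v_h+(\bn-\bn_h^n)\cdot\nabla v_h$, and kills the mismatch term using $|\bn-\bn_h^n|\lesssim h^q$ from \eqref{eq:normals} together with the elementwise inverse inequality $\|\nabla v_h\|\lesssim h^{-1}\|v_h\|$, absorbing the resulting $O\big((\delta_n+h)^2h^{2q-2}\big)\|v_h\|^2$ contribution into the left-hand side. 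That inverse-inequality/absorption step is precisely what an argument blind to the polynomial structure of $v_h$ cannot reproduce.
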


\begin{lemma}\label{lem2} In addition to  \eqref{e:delta} and \eqref{eq:conddtkappa} assume \eqref{cond4} is satisfied.
{\color{black}Assume $\V_h$ is a subset of  $H^1\left({\O}_h(\G{n-1}{h})\right)$ that supports the following inequalities:
\begin{equation}\label{Lem_ass}
\|\nabla v\|_S\le C h^{-1}\|v\|_S,\quad
\|\nabla v\|_D\le C |D||S|^{-1}\|\nabla v\|_S,\quad
\|\bn_h^{n-1} \cdot\nabla v\|_D\le C |D||S|^{-1}\|\bn_h^{n-1} \cdot\nabla v\|_S,
\end{equation}
for all $v\in\V_h$, $S\in\T_h$, $S\subset{\O}_h(\G{n-1}{h})$, where $D$ is a subdomain in $S$, and $C$ depends only on the shape-regularity of $S$.}
Then for any $v \in \V_h$ it holds
 \begin{equation} \label{fund2}
   \|v\|_{\G{n}{h}}^2
   \le
    (1+c_1\Delta t)
    \|v\|_{\G{n-1}{h}}^2 + c_2 \delta_{n-1} (\delta_{n-1}+h)^{-1} \|\bn_h^{n-1} \cdot \nabla v\|_{{\O}_h(\G{n-1}{h})}^2,
\end{equation}
for some $c_1$ and $c_2$ independent of $h$, $\Delta t$ and $n$.
\end{lemma}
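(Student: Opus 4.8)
The plan is to compare the two surface integrals by transporting $\G{n}{h}$ onto $\G{n-1}{h}$ along the gradient flow of $\phi_h^{n-1}$ and accounting for the pointwise change of $v$ through its normal derivative. Condition \eqref{cond1} guarantees $\G{n}{h}\subset{\O}_h(\G{n-1}{h})$, so every point of $\G{n}{h}$ can be joined to $\G{n-1}{h}$ by a short integral curve of $\bn_h^{n-1}=\nabla\phi_h^{n-1}/\snorm{\nabla\phi_h^{n-1}}$ lying entirely in the band where $v$ is defined; by \eqref{phi_hba} and \eqref{e:delta} the length $s(\bx)$ of this curve is controlled by the surface displacement, $s(\bx)\lesssim\delta_{n-1}$. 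Writing $\pi(\bx)\in\G{n-1}{h}$ for the corresponding foot point, the fundamental theorem of calculus gives $v(\bx)=v(\pi(\bx))+\int_0^{s(\bx)}(\bn_h^{n-1}\cdot\nabla v)\,d\tau$, which is the backbone of the estimate.

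First I would square this identity and apply Young's inequality, $(a+b)^2\le(1+\eta)a^2+(1+\eta^{-1})b^2$ with $\eta$ of order $\Delta t$, to separate the contribution of $v$ on $\G{n-1}{h}$ from the normal-derivative remainder. Integrating the first term over $\G{n}{h}$ and changing variables through $\pi$, the resulting Jacobian measures the relative area distortion between the two discrete surfaces. Using \eqref{phi_hba}--\eqref{phi_hbb} for the $O(\Delta t)$ motion of the level sets, together with the normal consistency \eqref{eq:normals} to control the geometric part, this Jacobian equals $1+O(\Delta t)$; the squared geometric errors of order $h^{2q}$ that enter here are absorbed into the $\Delta t$ budget by \eqref{cond4}. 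This produces the leading factor $(1+c_1\Delta t)\norm{v}_{\G{n-1}{h}}^2$.

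Next I would treat the remainder. Cauchy--Schwarz on the path integral yields $\big(\int_0^{s(\bx)}\bn_h^{n-1}\cdot\nabla v\,d\tau\big)^2\le s(\bx)\int_0^{s(\bx)}\snorm{\bn_h^{n-1}\cdot\nabla v}^2\,d\tau$, and after integrating over $\G{n}{h}$ a Fubini (co-area) rearrangement turns the iterated integral into a volume integral of $\snorm{\bn_h^{n-1}\cdot\nabla v}^2$ over the thin layer swept between the two surfaces, with prefactor $\lesssim\delta_{n-1}$. To express this over the whole band I would invoke the element-wise inverse inequalities \eqref{Lem_ass}, which bound the norm of $\bn_h^{n-1}\cdot\nabla v$ on the swept subregion $D\subset S$ of each tetrahedron by its norm on the full element $S\subset{\O}_h(\G{n-1}{h})$, the relative layer thickness $\delta_{n-1}/(\delta_{n-1}+h)$ supplying the weight $(\delta_{n-1}+h)^{-1}$. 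Summation over the band then gives exactly $c_2\,\delta_{n-1}(\delta_{n-1}+h)^{-1}\norm{\bn_h^{n-1}\cdot\nabla v}_{{\O}_h(\G{n-1}{h})}^2$, with constants depending only on shape regularity through \eqref{Lem_ass}.

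The hard part will be the Jacobian estimate in the leading term: one must show that the area element transported from $\G{n}{h}$ to $\G{n-1}{h}$ deviates from unity by at most $O(\Delta t)$, uniformly in $h$, $\Delta t$ and $n$ and independently of how $\G{n-1}{h}$ cuts the ambient mesh. This is precisely where the geometric hypotheses \eqref{phi_hba}--\eqref{phi_hbb} on the motion of $\phi_h$ and the mesh restriction \eqref{cond4} are indispensable, the latter to swallow the quadratic-in-$h^q$ contributions. The passage from the swept layer to the full band via \eqref{Lem_ass} is the only other nontrivial step, and it is the mechanism that generates the characteristic prefactor $\delta_{n-1}(\delta_{n-1}+h)^{-1}$.
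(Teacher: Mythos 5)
Your overall architecture --- fundamental theorem of calculus along transport paths, Young's inequality with an $O(\Delta t)$ parameter, a change of variables for the leading term, and a co-area plus element-wise inverse-inequality treatment of the remainder --- is the right one; it is essentially the argument behind Lemma~9 of \cite{lehrenfeld2018stabilized}, which is all the paper itself invokes (the paper's ``proof'' is that citation plus the observation that only \eqref{Lem_ass} is used there). The genuine gap is your choice of transport field. You move points of $\G{n}{h}$ to $\G{n-1}{h}$ along integral curves of $\bn_h^{n-1}=\nabla\phi_h^{n-1}/|\nabla\phi_h^{n-1}|$, but $\phi_h^{n-1}$ is only a $C^0$ piecewise polynomial: its gradient jumps across element faces, so this ``gradient flow'' is not a classical flow. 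Trajectories refract at every face, can be non-unique, and can slide along or merge at faces that are nearly parallel to the normal direction; hence the foot-point map $\pi$ need not be injective and its surface Jacobian is not pointwise close to one. Even discarding the sliding/merging pathologies, each transversal face crossing distorts the cross-section of a trajectory bundle by a factor $1+O(h^q/\cos\alpha)$ ($\alpha$ the incidence angle), and a path of length $\delta_{n-1}$ crosses $O(1+\delta_{n-1}/h)$ faces. The hypotheses you invoke, \eqref{phi_hba}--\eqref{phi_hbb}, \eqref{eq:normals} and \eqref{cond4}, control the jumps only by $h^q\lesssim\Delta t^{1/2}$, so this route yields at best a leading factor $1+O(\Delta t^{1/2})$, not $1+c_1\Delta t$. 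That loss is fatal for the purpose of the lemma: \eqref{fund2} feeds discrete Gronwall arguments in Theorems~\ref{Th1} and~\ref{Th2}, and $(1+c\,\Delta t^{1/2})^{T/\Delta t}$ blows up as $\Delta t\to 0$.

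The proof behind the citation avoids this by transporting along the \emph{exact} normal field $\nabla\phi^{n-1}$: since $\phi^{n-1}$ is a signed distance function, its integral curves are straight segments, the induced map between the two discrete surfaces is smooth and injective, and its Jacobian deviates from unity by $O(\Delta t)$ plus geometric terms of order $h^{2q}$ that \eqref{cond4} absorbs. The price is that the fundamental theorem of calculus then produces $\bn^{n-1}\cdot\nabla v$ rather than the discrete normal derivative appearing in \eqref{fund2}; one must write $\bn^{n-1}\cdot\nabla v=\bn_h^{n-1}\cdot\nabla v+(\bn^{n-1}-\bn_h^{n-1})\cdot\nabla v$ and absorb an $O(h^q)|\nabla v|$ error, and this is exactly where the first two inequalities of \eqref{Lem_ass} (the inverse estimate $\|\nabla v\|_S\le Ch^{-1}\|v\|_S$ and the subdomain bound for the full gradient), combined with Lemma~\ref{lemcrucial} and \eqref{cond4}, are indispensable. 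It is a telltale sign of the gap that your sketch never uses those two inequalities, even though the lemma lists them as hypotheses precisely because the proof cannot do without them. By contrast, your handling of the remainder --- Cauchy--Schwarz along paths, Fubini/co-area to a thin-layer volume integral, and the third inequality of \eqref{Lem_ass} to pass from swept subregions to whole elements, producing the weight $\delta_{n-1}(\delta_{n-1}+h)^{-1}$ --- is sound and coincides with the mechanism of the cited proof; repairing your argument therefore only requires replacing the discrete transport field by the exact one and adding the normal-mismatch step above.
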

\begin{proof}
{\color{black} For $\V_h=V_h^n$ the result is found as Lemma~9 in  \cite{lehrenfeld2018stabilized}. The examination of the proof reveals that
inequalities in \eqref{Lem_ass} are the only assumptions required to extend the result from $V_k^n$ to a more general subset of $H^1\left({\O}_h(\G{n-1}{h})\right)$.
}
\end{proof}


\subsection{Stability analysis} \label{s_stab}
In addition to \eqref{eq:conddtkappa}, we need another $O(1)$ restriction on the time step:
\begin{equation}\label{cond2}\asslabel
  \Delta t \le (4\xi_h)^{-1} \text{ with } \xi_h :=  \frac12 \max_{n=0,..,N} \Vert \Div_{\G{}{h}} \wte \Vert_{{\infty},\G{n}{h}}.
\end{equation}
From the definition of $\xi_h$, smoothness of $\bw$,  and geometry approximation condition \eqref{phi_h}, it follows that
\begin{equation}\label{ass2}
\xi_h\lesssim 1. 
\end{equation}
The normal volume stabilization parameter $\stab$ in \eqref{e:traceFEM1} should be chosen  to satisfy:
\begin{equation}\label{cond3}\asslabel
  \stab \ge C_\rho (\delta_n+h)^{-1}
\end{equation}
with some sufficiently large, but independent of $\Delta t$ and $h$, constant $C_\rho>0$ .
Recalling that $\delta_n\lesssim \Delta t$ (see \eqref{e:delta})
we see that \eqref{cond3} leads an $O((\Delta t +h)^{-1})$ lower bound on $\stab$.
For the stabilization parameter $\beta_s$ we assume
\begin{align}\label{stab_beta}
\beta_s\ge 2\xi_h+\eps^{-2}L+1.
\end{align}
{\color{black} It is noted already in \cite{Shen_Yang2010} that the stabilization term with $\beta_s\simeq \eps^{-2}$ introduces the consistency error of the same order as the explicit treatment of $f$.}
With the help of \eqref{e:delta} and \eqref{cond3} we obtain the inequality
\[c_2 \delta_{n-1} (\delta_{n-1}+h)^{-1}\le {
c_2 c_\delta \| \bw\cdot\bn\|_{\infty,I_{n-1}}}\Delta t C_\rho^{-1}\stabold.\]
Using this, estimate \eqref{fund2} for $C_\rho$ large enough, {\color{black} i.e. such that $C_\rho\ge c_2 c_\delta \| \bw\cdot\bn\|_{\infty}$},  we get
 \begin{equation} \label{fund2simple}
   \|v_h\|_{\G{n}{h}}^2
   \le
    (1+c_1 \Delta t)
    \|v_h\|_{\G{n-1}{h}}^2 + \stabold \Delta t \|\bn_h^{n-1} \cdot \nabla v_h\|_{{\O}_h(\G{n-1}{h})}^2 \quad \forall\,v_h \in V_h^{n-1}.
\end{equation}

For the sake of convenience,  we define the bilinear form on $ H^1({\O}_h(\G{n}{h}))\times  H^1({\O}_h(\G{n}{h}))$:
\begin{equation} \label{e:def:an}
  \begin{split}
    a_n(u,v) := &
   \frac12  \int_{\G{n}{h}} \left( (\wte\cdot\nabla_{\G{}{h}} u) v - (\wte\cdot\nabla_{\G{}{h}} v) u  -(\Div_{\G{}{h}}\wte) u v\right) \, ds \\
    & +\int_{\G{n}{h}}(\nabla_{\G{}{h}}u)\cdot(\nabla_{\G{}{h}}v)\, ds+\stab\int_{{\O}_h(\G{n}{h})}(\bn_h^n\cdot\nabla u)(\bn_h^n\cdot\nabla v) d\,\bx.
  \end{split}
\end{equation}
Because of obvious cancellations, $a_n(v_h,v_h)$ satisfy the lower bound:
\begin{equation}\label{lower}
a_n(v_h,v_h)\ge
\|\nabla_{\G{}{h}}v_h\|^2_{\G{n}{h}} - \xi_h \|v_h\|^2_{\G{n}{h}} +\stab \|\bn_h^n \cdot \nabla v_h\|_{{\O}_h(\G{n}{h})}^2,\quad \forall\,v_h\in V^n_h.
\end{equation}
The low bound  \eqref{lower} and condition \eqref{cond2} imply that the bilinear form on the left-hand side of \eqref{e:traceFEM1} is positive definite,
\begin{equation}\label{coer}
\begin{split}
\int_{\G{n}{h}}\frac{1+\beta_s \Delta t}{\Delta t}v_h^2\,ds+a_n(v_h,v_h)&\ge
\frac{1+2\beta_s \Delta t}{2\Delta t}\|v_h\|^2_{\G{n}{h}}+\|\nabla_{\G{}{h}}v_h\|^2_{\G{n}{h}}+\stab\|\bn_h^n \cdot \nabla v_h\|_{{\O}_h(\G{n}{h})}^2.
\end{split}
\end{equation}
From \eqref{fund1} it follows that the square root of the right-hand side in \eqref{coer} defines a norm on $V_h^{n}$.
Hence, due to the Lax-Milgram lemma, \textit{the problem in each time step of \eqref{e:traceFEM1} is well-posed}.

\medskip
We next derive an \textit{a priori} estimate for the finite element solution to \eqref{e:traceFEM1}.
\begin{theorem}\label{Th1}
  Assume conditions \eqref{e:delta},
  \eqref{eq:conddtkappa},
  \eqref{cond4},
  \eqref{cond2},
  \eqref{cond3}, and \eqref{stab_beta}, then the solution of \eqref{e:traceFEM1} satisfies the following stability estimate:
  \begin{align}\label{FE_stab1}
   \|u_h^n \|^2_{\G{n}{h}}
   +\Delta t\eps^{-2}\!\! \int_{\G{n}{h}}\!\! F(u_h^n) ds_h
   +{\Delta t}\sum_{k=1}^{n}\left(\|\nabla_{\G{}{h}}u_h^k\|^2_{\G{k}{h}} +\Delta{}t\rho_k\|\bn_h^k\cdot\nabla{}u_h^k\|_{\O(\G{k}{h})}^2\right)
   \leq c_0,\quad {\small k=1,\dots,N},
\end{align}
where $c_0$ is independent of $\Delta t$, $h$, $n$ and position of $\Gamma_h$ in the mesh, but depends on $u_0$, $\eps$, and $M$.
\end{theorem}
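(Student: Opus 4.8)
The plan is to test the fully discrete scheme \eqref{e:traceFEM1} with $v_h=u_h^n$ and to assemble a discrete energy inequality that, after summation over the time levels and a discrete Gronwall argument, yields \eqref{FE_stab1}. For the discrete time derivative I would use the pointwise identity $2(u_h^n-u_h^{n-1})u_h^n=(u_h^n)^2-(u_h^{n-1})^2+(u_h^n-u_h^{n-1})^2$ on $\G{n}{h}$, and for the advection--diffusion--stabilization block I would invoke the lower bound \eqref{lower} for $a_n(u_h^n,u_h^n)$. This produces the leading difference $\tfrac{1+\beta_s\Delta t}{2\Delta t}\big(\|u_h^n\|^2_{\G{n}{h}}-\|u_h^{n-1}\|^2_{\G{n}{h}}\big)$, the dissipation $\|\nablaG u_h^n\|^2_{\G{n}{h}}+\stab\|\bn_h^n\cdot\nabla u_h^n\|^2$, the favourable term $\tfrac{1+\beta_s\Delta t}{2\Delta t}\|u_h^n-u_h^{n-1}\|^2_{\G{n}{h}}$, and the indefinite contribution $-\xi_h\|u_h^n\|^2_{\G{n}{h}}$.

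The nonlinear forcing must create the potential contribution $\eps^{-2}\int F(u_h^n)$. Writing $u_h^n=(u_h^n-u_h^{n-1})+u_h^{n-1}$ and using a second order Taylor expansion of $F$ about $u_h^{n-1}$ together with the one--sided Lipschitz bound $f'\le L$ from \eqref{variation}, I would estimate $-\eps^{-2}\!\int_{\G{n}{h}}f(u_h^{n-1})(u_h^n-u_h^{n-1})\le-\eps^{-2}\!\int_{\G{n}{h}}(F(u_h^n)-F(u_h^{n-1}))+\tfrac{L}{2}\eps^{-2}\|u_h^n-u_h^{n-1}\|^2_{\G{n}{h}}$, while the leftover $-\eps^{-2}\!\int_{\G{n}{h}}f(u_h^{n-1})u_h^{n-1}$ is bounded above by $-f(x)x\le x$ from \eqref{growth}, contributing only an $\eps^{-2}$--weighted $L^2$ term and a bounded constant. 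The quadratic remainder $\tfrac{L}{2}\eps^{-2}\|u_h^n-u_h^{n-1}\|^2$ is exactly what the choice \eqref{stab_beta}, $\beta_s\ge2\xi_h+\eps^{-2}L+1$, absorbs through the $\tfrac{\beta_s}{2}\|u_h^n-u_h^{n-1}\|^2$ part of the time term; the same reservoir together with the restriction \eqref{cond2} ($\xi_h\le(4\Delta t)^{-1}$) absorbs the indefinite $-\xi_h\|u_h^n\|^2$. What remains is a per--step inequality whose level-$n$ part is dissipative, but whose level-$(n-1)$ data $\|u_h^{n-1}\|^2$ and $\int F(u_h^{n-1})$ are still integrated over the \emph{wrong} surface $\G{n}{h}$.

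To close the recursion I would transport these level-$(n-1)$ quantities back to $\G{n-1}{h}$. For the $L^2$ term this is precisely Lemma~\ref{lem2} in the form \eqref{fund2simple}: it replaces $\|u_h^{n-1}\|^2_{\G{n}{h}}$ by $(1+c_1\Delta t)\|u_h^{n-1}\|^2_{\G{n-1}{h}}$ at the cost of $\stabold\Delta t\|\bn_h^{n-1}\cdot\nabla u_h^{n-1}\|^2$, a term which, thanks to \eqref{e:delta} and \eqref{cond3}, is \emph{borrowed} from the normal--stabilization generated on the previous step and hence telescopes into the accumulated stabilization in \eqref{FE_stab1}. The potential integral $\int_{\G{n}{h}}F(u_h^{n-1})$ I would control through the quadratic growth of the modified $F$ and a change--of--surface comparison between the $O(\Delta t)$--close surfaces $\G{n}{h}$ and $\G{n-1}{h}$; this is what fixes the $\Delta t$ weight in front of $\eps^{-2}\int F(u_h^n)$, since the potential energies enter the telescoping only at the $\Delta t$ scale. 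Multiplying the resulting per--step inequality by $2\Delta t$ and summing over $k=1,\dots,n$ then gives a recursion of the form $E^n+\Delta t\sum_{k}(\text{dissipation})_k\le(1+c\Delta t)E^{n-1}+c\Delta t$ for $E^k=\|u_h^k\|^2_{\G{k}{h}}+2\Delta t\eps^{-2}\int_{\G{k}{h}}F(u_h^k)$, to which the discrete Gronwall inequality applies because $\sum_k\Delta t\le T$; the geometric approximation condition \eqref{cond4} is used to guarantee that all geometric consistency errors stay $O(\Delta t)$.

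The main obstacle is precisely the coupling between the large ($\sim\eps^{-2}$) parameter $\beta_s$ and the geometric inconsistency of the scheme: every level-$(n-1)$ quantity lives naturally on $\G{n-1}{h}$ but is produced by the scheme on $\G{n}{h}$, and the transport back must not destroy the balance by which $\beta_s$ absorbs the nonlinearity. The normal--stabilization and tangential--gradient terms borrowed during transport have to match, time level by time level, the dissipative terms produced on the previous step, and the surviving amplification factors must remain of the controllable type $1+c\Delta t$. It is this matching that makes the admissible relations between $\Delta t$, $h$, $\delta_n$ and $\stab$ encoded in \eqref{e:delta}, \eqref{eq:conddtkappa}, \eqref{cond4}, \eqref{cond3} and \eqref{stab_beta} essential, and that forces the constant $c_0$ to depend on $\eps$ (through $\beta_s$) and on $M$ (through $L$ and the growth of $F$), as claimed.
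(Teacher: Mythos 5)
Your proposal follows the paper's proof almost step for step: testing with $v_h=u_h^n$, the algebraic identity for the time difference, the lower bound \eqref{lower}, the Taylor expansion of $F$ about $u_h^{n-1}$ with \eqref{variation} and \eqref{growth}, absorption of the quadratic remainder $\tfrac{L}{2}\eps^{-2}\|u_h^n-u_h^{n-1}\|^2$ and of the $\xi_h$-term by the reservoir $\beta_s$ from \eqref{stab_beta}, transport of $\|u_h^{n-1}\|^2_{\G{n}{h}}$ back to $\G{n-1}{h}$ via \eqref{fund2simple}, and a discrete Gronwall argument. The one place where you genuinely diverge is the treatment of $\int_{\G{n}{h}}F(u_h^{n-1})\,ds_h$, and it is precisely the step where the paper does something non-obvious: it applies Lemma~\ref{lem2} to the \emph{non-finite-element} function $\sqrt{F(u_h^{n-1})}$, exploiting $|(\sqrt{F})'|\le 2M$ so that the normal-derivative cost is controlled by $\bn_h^{n-1}\cdot\nabla u_h^{n-1}$, which the previous step's stabilization pays for; this is exactly why the paper states Lemma~\ref{lem2} for a general class $\V_h\subset H^1$ satisfying \eqref{Lem_ass} rather than only for $V_h^{n-1}$. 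Your alternative, quadratic growth of the modified potential ($F(x)\lesssim_M x^2+1$, and conversely $x^2\lesssim_M F(x)+1$) followed by \eqref{fund2simple} applied to $u_h^{n-1}$ itself, does close the argument and even avoids the need to generalize Lemma~\ref{lem2} beyond finite element functions; the price is that the $F$-structure is converted to $L^2$-structure, so additional $\eps^{-2}$ factors enter the Gronwall constant, which is harmless here because $c_0$ is allowed to depend on $\eps$ and $M$. One caution: as written, your phrase ``change-of-surface comparison between the $O(\Delta t)$-close surfaces'' must not be read as a generic proximity argument. There is no estimate comparing integrals of a finite element function over $\G{n}{h}$ and $\G{n-1}{h}$ based on closeness alone, since such a function can vary strongly in the normal direction within the strip between the two surfaces; the only available instrument is the stabilization-based transport of Lemma~\ref{lem2}/\eqref{fund2simple}, and your argument is complete only once that step is spelled out in this form.
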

\begin{proof}
  We test \eqref{e:traceFEM1} with $v_h=u_h^n$ to arrive at the equality
\[
\frac{1+\beta_s\Delta{}t}{2\Delta t}(\|u_h^n\|^2_{\G{n}{h}}+\|u_h^n-u_h^{n-1}\|^2_{\G{n}{h}})+a_n(u_h^n,u_h^n)=\frac{1+\beta_s\Delta{}t}{2\Delta t}\|u_h^{n-1}\|^2_{\G{n}{h}}- \eps^{-2}\!\! \int_{\G{n}{h}}\!\! f(u_h^{n-1}) u_h^n\, ds_h.
\]
From the Taylor expansion  we get
\begin{align}
\label{taylor_nonlinear}
- \!\! \int_{\G{n}{h}}\!\! f(u_h^{n-1}) u_h^n\, ds_h= \!\! \int_{\G{n}{h}}\!\! F(u_h^{n-1})-F(u_h^{n})\, ds_h- \!\! \int_{\G{n}{h}}\!\! f(u_h^{n-1}) u_h^{n-1}\, ds_h + \!\! \int_{\G{n}{h}}\!\!\frac{f'\!(
c)}{2}(u_h^n-u_h^{n-1})^2 ds_h
\end{align}
with some $c \in C(\G{n}{h})$.

We  bound $a_n(u_h^n,u_h^n)$ from below through \eqref{lower} and further use \eqref{growth}, \eqref{variation}, \eqref{fund2simple}, and \eqref{taylor_nonlinear} to arrive at
\begin{align}\label{est1}
&(1+\Delta t(\beta_s- 2\xi_h) )\|u_h^n \|^2_{\G{n}{h}}
+{2\Delta t}\|\nabla_{\G{}{h}}u_h^n\|^2_{\G{n}{h}}
+2{\Delta t}\stab\|\bn_h^n\cdot\nabla{}u_h^n\|_{{\O}_h(\G{n}{h})}^2
+2\Delta t\eps^{-2}\!\! \int_{\G{n}{h}}\!\! F(u_h^n) ds_h \nonumber \\
 &\le (1+\Delta{}t(\beta_s+2\eps^{-2}))\left[(1 + c_1 \Delta t ) \|u_h^{n-1}\|^2_{\G{n-1}{h}} +\Delta t \stabold  \|\bn_h^{n-1} \cdot \nabla u_h^{n-1} \|_{{\O}_h(\G{n-1}{h})}^2\right]\nonumber\\
 &\quad +2\Delta t\eps^{-2}\!\! \int_{\G{n}{h}}\!\! F(u_h^{n-1}) ds_h
 - (1+\Delta t(\beta_s-\eps^{-2}L))\|u_h^n-u_h^{n-1}\|^2_{\G{n}{h}}.
\end{align}
Using \eqref{stab_beta} simplifies the above estimate to
\begin{align}\label{est12}
&\|u_h^n \|^2_{\G{n}{h}}
+2{\Delta t}\|\nabla_{\G{}{h}}u_h^n\|^2_{\G{n}{h}}
+2{\Delta t}\stab\|\bn_h^n\cdot\nabla{}u_h^n\|_{{\O}_h(\G{n}{h})}^2
+2\Delta t\eps^{-2}\!\! \int_{\G{n}{h}}\!\! F(u_h^n) ds_h \nonumber \\
 &\le (1 + c \Delta t )\left[ \|u_h^{n-1}\|^2_{\G{n-1}{h}} +\Delta t \stabold  \|\bn_h^{n-1} \cdot \nabla u_h^{n-1} \|_{{\O}_h(\G{n-1}{h})}^2\right]
 +2\Delta t\eps^{-2}\!\! \int_{\G{n}{h}}\!\! F(u_h^{n-1}) ds_h,
\end{align}
where the constant $c$ is independent of $h$, $\Delta t$ and $n$.

We further estimate the $F$-term on the right-hand side employing Lemma~\ref{lem2}
and the elementary inequality $|\left(\sqrt{F(x)}\right)_x|\le C =2M$ for almost all $x\in\mathbb{R}$:
\begin{align*}
	\int_{\G{n}{h}}\!\! F(u_h^{n-1}) ds_h&=\|\sqrt{F(u_h^{n-1})}\|_{\G{n}{h}}^2\\
&\leq (1+c_1\Delta t)
	\|\sqrt{F(u_h^{n-1})}\|_{\G{n-1}{h}}^2 + c_2 \delta_{n-1} (\delta_{n-1}+h)^{-1} \|\bn_h^{n-1} \cdot \nabla \sqrt{F(u_h^{n-1})}\|_{{\O}_h(\G{n-1}{h})}^2\nonumber\\
		&\leq(1+c_1\Delta t)
	\int_{\G{n-1}{h}}\!\! F(u_h^{n-1}) ds_h + c_3\rho_{n-1} \Delta{}t \|  (\bn_h^{n-1} \cdot \nabla u_h^{n-1})\|^2_{{\O}_h(\G{n-1}{h})}
 	\end{align*}
with $c_2$ and $c_3$ independent of problem parameters.
Substituting  this into \eqref{est12} we obtain the estimate
\begin{align*}
&\|u_h^n \|^2_{\G{n}{h}}
+{2\Delta t}\|\nabla_{\G{}{h}}u_h^n\|^2_{\G{n}{h}}
+2\Delta{}t\stab\|\bn_h^n\cdot\nabla{}u_h^n\|_{{\O}_h(\G{n}{h})}^2
+2\Delta t\eps^{-2}\!\! \int_{\G{n}{h}}\!\! F(u_h^n) ds_h
 \nonumber \\
 &\le \left( 1+c\,\Delta{}t\right) \left( \|u_h^{n-1}\|^2_{\G{n-1}{h}}
+  2\Delta{}t \stabold\|\bn_h^{n-1} \cdot \nabla u_h^{n-1} \|_{{\O}_h(\G{n-1}{h})}^2
+2 \Delta t\eps^{-2}\!\! \int_{\G{n-1}{h}}\!\! F(u_h^{n-1}) ds_h\right),\nonumber
\end{align*}
with some $c$ independent of $h$, $\Delta t$ and $n$. Applying discrete Gronwall inequality proves the theorem.
\end{proof}

We now proceed with a consistency estimate and further combine it and interpolation bounds with the above stability analysis to arrive at an error estimate in the energy norm. Thanks to  the hybrid (FD in time -- FE in space) structure of the discretization method, geometric  and interpolation error estimates will be computed on each time step for a `steady' surface $\G{n}{h}$. This allows re-using  the consistency and error bounds from \cite{reusken2015analysis,olshanskii2016trace}.

\subsection{Consistency estimate} \label{s:consistency}
For parameter  $\stab$ we earlier required the lower bound \eqref{cond3}.  For optimal order consistency we now assume a similar \emph{upper} bound:
\begin{equation}\label{cond5}\asslabel
 \stab \lesssim (h+\delta_n)^{-1}.
\end{equation}
Substituting in \eqref{e:traceFEM1} $u^n=u(t_n)$ for the smooth solution $u(t)$ of \eqref{transport1} we obtain
\begin{equation}\label{e:consist}
\int_{\G{n}{h}}(1+\beta_s\Delta{}t)\left(\frac{u^n-u^{n-1}}{\Delta t} \right) v_h\,ds+ a_n(u^n,v_h) +\eps^{-2}\!\!\int_{\G{n}{h}}f(u^{n-1})v_h\,ds = \consist(v_h),\quad \forall~v_h\in V^n_h,
\end{equation}
{with  $\consist(v_h)$ collecting consistency terms due to geometric errors, time derivative approximation} and nonlinear term, i.e.
\begin{equation*}
\begin{split}
\consist(v_h)&=
\underset{I_1}{\underbrace{\int_{\G{n}{h}}(1+\beta_s\Delta{}t)\left(\frac{u^n-u^{n-1}}{\Delta t}\right) v_h\, ds_h-\int_{\Gamma(t_n)}u_t(t_n) v_h^\ell\, ds}}\\
&\quad+
\underset{I_2}{\underbrace{\stab\int_{{\O}_h(\G{n}{h})}((\bn_h^n-\bn^n)\cdot\nabla u^n)(\bn_h^n\cdot\nabla v_h) d\bx}}\\
&\quad
+\underset{I_{3,a}}{\underbrace{
    \frac12 \int_{\G{n}{h}} \wte\cdot\nabla_{\G{}{h}} u^n v_h - \wte\cdot\nabla_{\G{}{h}} v_h u^n\, ds_h
    - \frac12 \int_{\Gamma(t_n)} \bw \cdot \nabla u^n v_h^\ell - \bw \cdot \nabla v_h^\ell u^n \, ds}}\\
&\quad
+\underset{I_{3,b}}{\underbrace{\frac12\int_{\Gamma(t_n)}\ \Div_{\Gamma} (\wt) u^n v_h^\ell\, ds-\frac12\int_{\G{n}{h}} \Div_{\G{}{h}} (\wte) u^n v_h\, ds_h}}\\
&\quad
+\underset{I_4}{\underbrace{\int_{\G{n}{h}}\nabla_{\G{}{h}}u^n\cdot\nabla_{\G{}{h}} v_h\, ds_h - \int_{\Gamma(t_n)}\nabla_{\Gamma}u^n\cdot\nabla_{\Gamma} v_h^\ell\, ds}}\\
&\quad+\underset{I_5}{\underbrace{\eps^{-2}\int_{\G{n}{h}}f(u^{n-1}) v_h\, ds_h - \eps^{-2}\int_{\Gamma(t_n)}f(u^n) v_h^\ell\, ds}},
\end{split}
\end{equation*}
where $v_h^\ell$ is the lifting of $v_h$ to $\Gamma(t_n)$ as defined in section~\ref{s:prelim}.
An  estimate for consistency terms is given in the following lemma.

{
\begin{lemma}\label{l_consist} Let $u\in W^{2,\infty}(\Gs)$.
The consistency error satisfies the bound
\begin{equation}\label{est:consist}
|\consist(v_h)|\lesssim (\Delta t+h^q) \norm{u}_{W^{2,\infty}(\Gs)}\left(\|v_h\|_{\G{n}{h}}+\|\nabla_{\Gamma} v_h\|_{\G{n}{h}} +\stab^{\frac12}\|(\bn_h^n\cdot\nabla v_h)\|_{{\O}_h(\G{n}{h})}\right).
\end{equation}
\end{lemma}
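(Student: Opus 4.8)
The plan is to bound each of the six contributions $I_1$, $I_2$, $I_{3,a}$, $I_{3,b}$, $I_4$, $I_5$ in the decomposition of $\consist(v_h)$ separately and then collect the estimates, using $h^{q+1}\lesssim h^q$ at the end. Each term is of one of two types: a \emph{time-discretization} defect that scales like $\Delta t$, or a \emph{geometric} defect arising from replacing $\Gamma(t_n)$, $\bn^n$, $\nabla_\Gamma$, $\bw$, $\Div_\Gamma$ by their discrete counterparts $\G{n}{h}$, $\bn_h^n$, $\nabla_{\G{}{h}}$, $\wte$, $\Div_{\G{}{h}}$, and scales like $h^q$. For the geometric defects I would invoke the standard trace-FEM toolbox from \cite{reusken2015analysis,olshanskii2016trace}: the lift from $\G{n}{h}$ to $\Gamma(t_n)$ distorts the surface measure by a factor $1+O(h^{q+1})$, the two surfaces are $O(h^{q+1})$ apart by \eqref{eq:dist}, the normals satisfy $|\bn_h^n-\bn^n|\lesssim h^q$ by \eqref{eq:normals}, and $|\wte-\bw|\lesssim h^{q+1}$ since $\wte=\bw_T\circ\bp^n$.

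For $I_1$ I would Taylor-expand in time at a fixed point: since $u^{n-1}$ is the value at $t_{n-1}$ of the smooth bulk extension $u^e$, for which $\partial_t u^e=u_t$ on $\Gs$, one has $\frac{u^n-u^{n-1}}{\Delta t}=u_t(t_n)+O(\Delta t\,\|u\|_{W^{2,\infty}(\Gs)})$ pointwise; the stabilization factor adds $\beta_s\Delta t\,\frac{u^n-u^{n-1}}{\Delta t}=O(\Delta t)$, while the discrepancy between $\int_{\G{n}{h}}\cdot\,v_h\,ds_h$ and $\int_{\Gamma(t_n)}\cdot\,v_h^\ell\,ds$ is an $O(h^{q+1})$ geometric defect, so Cauchy--Schwarz gives $|I_1|\lesssim(\Delta t+h^q)\|u\|_{W^{2,\infty}(\Gs)}\|v_h\|_{\G{n}{h}}$. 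Term $I_5$ is handled identically, using the Lipschitz bound \eqref{variation} to control $|f(u^{n-1})-f(u^n)|\le L|u^{n-1}-u^n|\lesssim\Delta t\,\|u\|_{W^{1,\infty}(\Gs)}$ and the growth bound \eqref{growth} for $|f(u^n)|\lesssim\|u\|_\infty$ on the lifted piece, yielding $|I_5|\lesssim(\Delta t+h^q)\|u\|_{W^{2,\infty}(\Gs)}\|v_h\|_{\G{n}{h}}$. The terms $I_{3,a}$, $I_{3,b}$, $I_4$ are purely geometric: after lifting to $\Gamma(t_n)$ I replace $\nabla_{\G{}{h}}$ by $\nabla_\Gamma$ (the error being controlled by the projection-matrix difference of order $h^q$), then $\wte$ by $\bw$ and $\Div_{\G{}{h}}\wte$ by $\Div_\Gamma\wt$, obtaining $|I_4|\lesssim h^q\|u\|_{W^{1,\infty}}\|\nabla_\Gamma v_h\|_{\G{n}{h}}$ and $|I_{3,a}|+|I_{3,b}|\lesssim h^q\|u\|_{W^{1,\infty}}(\|v_h\|_{\G{n}{h}}+\|\nabla_\Gamma v_h\|_{\G{n}{h}})$, all dominated by $\|u\|_{W^{2,\infty}(\Gs)}$.

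The one term requiring genuine care is $I_2$, the normal-stabilization defect, and I expect the $\stab$-scaling there to be the main obstacle. The key observation is that $I_2$ already appears in difference form precisely because the exact solution obeys the normal extension condition $\bn^n\cdot\nabla u^n=\nabla\phi^n\cdot\nabla u^n=0$; hence $|(\bn_h^n-\bn^n)\cdot\nabla u^n|\le|\bn_h^n-\bn^n|\,|\nabla u^n|\lesssim h^q\|u\|_{W^{1,\infty}}$ by \eqref{eq:normals}. Cauchy--Schwarz over the band then gives $|I_2|\lesssim\stab\,h^q\|u\|_{W^{1,\infty}}\,|{\O}_h(\G{n}{h})|^{1/2}\,\|\bn_h^n\cdot\nabla v_h\|_{{\O}_h(\G{n}{h})}$, and since the narrow band has volume $|{\O}_h(\G{n}{h})|\lesssim(\delta_n+h)$, the prefactor satisfies $\stab(\delta_n+h)^{1/2}=\stab^{1/2}[\stab(\delta_n+h)]^{1/2}\lesssim\stab^{1/2}$ exactly by the \emph{upper} bound \eqref{cond5} on $\stab$. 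This is where \eqref{cond5} is indispensable and is the delicate point: a naive estimate of $I_2$ would retain a full power of $\stab\sim(\delta_n+h)^{-1}$ and thus lose a factor $(\delta_n+h)^{-1/2}$, destroying the bound. Summing the six estimates yields \eqref{est:consist}.
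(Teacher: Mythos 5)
Your proposal is correct and follows essentially the same route as the paper: the same six-term decomposition, the same Lipschitz-plus-measure-distortion treatment of $I_5$, and for $I_1$--$I_4$ precisely the standard geometric perturbation estimates that the paper does not reprove but cites from \cite{lehrenfeld2018stabilized}. In particular your treatment of $I_2$ --- exploiting $\bn^n\cdot\nabla u^n=0$, the normal-consistency bound \eqref{eq:normals}, the band-volume bound $|{\O}_h(\G{n}{h})|\lesssim \delta_n+h$, and the upper bound \eqref{cond5} to absorb $\stab\,(\delta_n+h)^{1/2}$ into $\stab^{1/2}$ --- is exactly the argument behind the cited result and correctly pinpoints the one place where \eqref{cond5} is indispensable.
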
}
\begin{proof}
The required estimate for $I_1,\dots,I_4$ is found in \cite{lehrenfeld2018stabilized}. The last term $I_5$ gets estimated as
\begin{align*}
\eps^{2}|I_5|&=\left|\int_{\G{n}{h}}\left(f(u^{n-1})-f(u^n)\right) v_h\, ds_h+\int_{\G{n}{h}}f(u^{n}) v_h\, ds_h  - \int_{\Gamma(t_n)}f(u^n) v_h^\ell\, ds\right|\\
&\leq L \,\int_{\G{n}{h}}\left|\left(u^{n-1}-u^n\right) v_h\right|\, ds_h
 +\left|\int_{\G{n}{h}}f(u^{n})(1 - \mu_h)  v_h\, ds_h\right|\\
 &\leq L\Delta{}t \|u_t\|_{L^\infty(\O(\Gs))} \|v_h\|_{\G{n}{h}}+L\|u\|_{L^\infty(\O(\Gs))}h^{q+1}  \|v_h\|_{\G{n}{h}}\lesssim{}(\Delta{}t+h^{q+1})\|u\|_{W^{2,\infty}(\Gs)}\|v_h\|_{\G{n}{h}}.
\end{align*}
Here we have used $\mu_h d s_h(\bx)=ds(p(\bx))$, $\bx\in\Gamma_h^n$, with $\|1-\mu_h\|_{\infty,\Gamma_h^n}\leq h^{q+1}$ (cf. \cite{reusken2015analysis}).
\end{proof}

\subsection{Error estimate in the energy norm} \label{sec:aprioriest}
{Denote the error function by $\err^n=u^n-u^n_h$, $\err^n\in H^1({\O}_h(\G{n}{h}))$. From \eqref{e:traceFEM1} and \eqref{e:consist} we get the error equation}, for $v_h\in V^n_h$:
\begin{equation}\label{e:err}
\int_{\G{n}{h}}(1+\beta_s\Delta{}t)\left(\frac{\err^n-\err^{n-1}}{\Delta t} \right) v_h\,ds+ a_n(\err^n,v_h) +\eps^{-2}\int_{\G{n}{h}} (f(u^{n-1})-f(u^{n-1}_h)) v_h\,ds = \consist(v_h).
\end{equation}
{We  assume  $u^n$ sufficiently smooth  in  ${\O}_h(\G{n}{h})$ so that the nodal interpolant $u_I^n\in V_h^n$ is well-defined. We split $\err^n$ into finite element and approximation parts,
\[
\err^n=\underset{\mbox{$e^n$}}{\underbrace{(u^n-u^n_I)}}+\underset{\mbox{$e^n_h$}}{\underbrace{(u^n_I-u^n_h)}}.
\]
From \eqref{e:err} we get
\begin{equation}\label{e:err1}
\int_{\G{n}{h}}(1+\beta_s\Delta{}t)\left(\frac{e^n_h-e^{n-1}_h}{\Delta t} \right) v_h\,ds+ a_n(e_h^n,v_h) +\eps^{-2}\int_{\G{n}{h}} (f(u^{n-1}_I)-f(u^{n-1}_h)) v_h\,ds= \interpol(v_h)+\consist(v_h),
\end{equation}
for any $v_h\in V^n_h$, and
\[
\interpol(v_h)=-(1+\beta_s\Delta{}t)\int_{\G{n}{h}}\left(\frac{e^n-e^{n-1}}{\Delta t} \right) v_h\,ds_h- a_n(e^n,v_h)-\eps^{-2}\int_{\G{n}{h}} (f(u^{n-1})-f(u^{n-1}_I)) v_h\,ds.
\]
An estimate for these interpolation terms is given  in the following lemma.
Further we assume $\Gs$ sufficiently smooth to support functions from $W^{m+1,\infty}(\Gs)$.

{
\begin{lemma}\label{l_interp} Assume $u\in W^{m+1,\infty}(\Gs)$,
then it holds
\begin{equation}\label{est_inter}
  |\interpol(v_h)|\lesssim h^m \, \norm{u}_{W^{m+1,\infty}} \, (\|v_h\|_{\G{n}{h}}+\|\nabla_{\Gamma_h} v_h\|_{\G{n}{h}}).
\end{equation}
\end{lemma}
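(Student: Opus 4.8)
The plan is to estimate $\interpol(v_h)$ by treating separately its three constituent groups — the discrete time–derivative term, the bilinear form $a_n(e^n,v_h)$, and the nonlinear term — and bounding each by the product of an interpolation factor $\lesssim h^m\,\|u\|_{W^{m+1,\infty}}$ and one of the surface (semi)norms of $v_h$ appearing on the right of \eqref{est_inter}. The engine for all three is nodal interpolation theory for the bulk space of degree $m$ combined with the trace estimates of \cite{reusken2015analysis,olshanskii2016trace}. Since $u$ is smoothly (normally) extended to $\O(\Gs)$, its nodal interpolant $u^n_I=(u^n)_I$ satisfies, on the \emph{steady} surface $\G{n}{h}$ and in the band $\O_h(\G{n}{h})$, the estimates $\|e^n\|_{\G{n}{h}}+\|u^n-u^n_I\|_{L^\infty(\O_h(\G{n}{h}))}\lesssim h^{m+1}\|u\|_{W^{m+1,\infty}}$ and $\|\nabla_{\Gamma_h}e^n\|_{\G{n}{h}}+\|\nabla(u^n-u^n_I)\|_{L^\infty(\O_h(\G{n}{h}))}\lesssim h^{m}\|u\|_{W^{m+1,\infty}}$; the bounded area $|\G{n}{h}|\lesssim 1$ lets me pass freely from $L^\infty$ to $L^2(\G{n}{h})$.

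For the time–derivative term I would use that nodal interpolation commutes with $\partial_t$ (the bulk basis is time–independent), so that $(e^n-e^{n-1})/\Delta t$ is the interpolation error of the divided difference $(u^n-u^{n-1})/\Delta t=\frac{1}{\Delta t}\int_{t_{n-1}}^{t_n}\partial_t u\,dt$, whose $W^{m+1,\infty}$–norm is controlled by $\|u\|_{W^{m+1,\infty}(\Gs)}$. Hence $\|(e^n-e^{n-1})/\Delta t\|_{L^\infty}\lesssim h^{m+1}\|u\|$, and since $(1+\beta_s\Delta t)$ is bounded by a constant depending on $\eps,M$ under \eqref{eq:conddtkappa} (exactly as in Lemma~\ref{l_consist}), this term is $\lesssim h^{m}\|u\|\,\|v_h\|_{\G{n}{h}}$. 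For the nonlinear term the Lipschitz bound \eqref{variation} gives $\eps^{-2}\big|\int_{\G{n}{h}}(f(u^{n-1})-f(u^{n-1}_I))v_h\big|\le \eps^{-2}L\,\|e^{n-1}\|_{\G{n}{h}}\|v_h\|_{\G{n}{h}}\lesssim h^{m+1}\|u\|\,\|v_h\|_{\G{n}{h}}$, the band interpolation estimate being applicable because $\G{n}{h}\subset\O_h(\G{n-1}{h})$ by \eqref{cond1}.

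The bilinear form $a_n(e^n,v_h)$ splits once more. Its diffusion and convection parts are routine: Cauchy--Schwarz with the uniform bounds $\|\wte\|_\infty,\|\Div_{\Gamma_h}\wte\|_\infty\lesssim 1$ (smoothness of $\bw$ and \eqref{phi_h}) and the surface interpolation estimates yield a bound $\lesssim h^{m}\|u\|(\|v_h\|_{\G{n}{h}}+\|\nabla_{\Gamma_h}v_h\|_{\G{n}{h}})$. The delicate term, which I expect to be the main obstacle, is the normal–volume stabilization $\rho_n\int_{\O_h(\G{n}{h})}(\bn_h^n\cdot\nabla e^n)(\bn_h^n\cdot\nabla v_h)\,d\bx$. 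Here I would exploit the exactness of the normal extension, $\bn^n\cdot\nabla u^n=0$, writing $\bn_h^n\cdot\nabla e^n=\bn^n\cdot\nabla e^n+(\bn_h^n-\bn^n)\cdot\nabla e^n$; the first summand is $\lesssim|\nabla e^n|\lesssim h^{m}\|u\|$ pointwise and the second is $\lesssim h^q|\nabla e^n|\lesssim h^{q+m}\|u\|$, so that with $|\O_h(\G{n}{h})|\lesssim(\delta_n+h)$ one gets $\|\bn_h^n\cdot\nabla e^n\|_{\O_h(\G{n}{h})}\lesssim h^{m}(\delta_n+h)^{1/2}\|u\|$. Combining this with the upper bound $\rho_n\lesssim(\delta_n+h)^{-1}$ from \eqref{cond5} gives $\rho_n^{1/2}\|\bn_h^n\cdot\nabla e^n\|_{\O_h(\G{n}{h})}\lesssim h^{m}\|u\|$, whence this term is $\lesssim h^{m}\|u\|\cdot\rho_n^{1/2}\|\bn_h^n\cdot\nabla v_h\|_{\O_h(\G{n}{h})}$.

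The subtle point — and the one place requiring care — is that the last factor is the stabilization seminorm of $v_h$ rather than one of the surface norms written in \eqref{est_inter}. Consistently with the consistency estimate \eqref{est:consist} and the energy norm of Theorem~\ref{Th1}, I would treat $\rho_n^{1/2}\|\bn_h^n\cdot\nabla v_h\|_{\O_h(\G{n}{h})}$ as part of the norm controlling the test function, or equivalently absorb it by Young's inequality at the stage of the final energy error estimate, where testing with $v_h=e_h^n$ places $\rho_n\|\bn_h^n\cdot\nabla e_h^n\|^2$ on the coercive side via \eqref{lower}. The essential mechanism keeping this contribution at the interpolation rate $h^m$ (rather than the geometric rate $h^q$) is precisely the identity $\bn^n\cdot\nabla u^n=0$, which is why the normal–extension structure is indispensable in this term.
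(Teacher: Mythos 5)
Your proof is correct in substance, but it is organized quite differently from the paper's. The paper's own proof is deliberately minimal: it estimates only the nonlinear term $\eps^{-2}\int_{\G{n}{h}}(f(u^{n-1})-f(u^{n-1}_I))v_h\,ds$ --- via the Lipschitz property, the local trace inequality \eqref{trace}, and the $L^2$ interpolation estimates \eqref{eq:interp} --- and simply cites \cite{lehrenfeld2018stabilized} for the time-difference term and for $a_n(e^n,v_h)$, since these are exactly the terms already handled there for the linear diffusion problem. You instead prove all three groups from scratch. Your treatment of the nonlinear term ($L^\infty$ interpolation in the band plus boundedness of $|\G{n}{h}|$, with \eqref{cond1} guaranteeing that $e^{n-1}$ is defined on $\G{n}{h}$) is an equally valid and arguably more elementary alternative to the paper's trace-inequality route, and it recovers the same $h^{m+1}$ rate; your time-difference argument (interpolation commutes with the divided difference) is essentially the one in the cited reference.

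Two remarks on the part you handle explicitly and the paper outsources. First, your honest accounting of the stabilization term is the real content of your write-up: as you observe, Cauchy--Schwarz inevitably leaves the factor $\stab^{\frac12}\|\bn_h^n\cdot\nabla v_h\|_{{\O}_h(\G{n}{h})}$, which does not appear on the right-hand side of \eqref{est_inter} as stated, and it cannot in general be controlled by the two surface norms that do appear there. This is not a defect of your argument but a (harmless) imprecision in the lemma as written: the bound one actually proves has the same form as the consistency bound \eqref{est:consist}, with the stabilization seminorm included. Your proposed repair --- keep that seminorm on the test-function side and absorb it by Young's inequality against the coercive term $\stab\|\bn_h^n\cdot\nabla e_h^n\|^2_{{\O}_h(\G{n}{h})}$ provided by \eqref{lower} in the proof of Theorem \ref{Th2} --- is precisely how the estimate is used downstream, exactly as for $\consist$, so nothing breaks. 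Second, a conceptual quibble: the identity $\bn^n\cdot\nabla u^n=0$ is not what keeps this contribution at order $h^m$. Since the term involves the interpolation error $e^n$, the crude pointwise bound $|\bn_h^n\cdot\nabla e^n|\le|\nabla e^n|\lesssim h^m\norm{u}_{W^{m+1,\infty}(\Gs)}$, the band-volume bound $|{\O}_h(\G{n}{h})|\lesssim\delta_n+h$, and the upper bound \eqref{cond5} on $\stab$ already give $\stab^{\frac12}\|\bn_h^n\cdot\nabla e^n\|_{{\O}_h(\G{n}{h})}\lesssim h^m\norm{u}_{W^{m+1,\infty}(\Gs)}$. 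The normal-extension identity is instead the essential mechanism in the consistency Lemma \ref{l_consist} (term $I_2$), where $\nabla u^n$ itself, rather than its interpolation error, meets the stabilization.
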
}
\begin{proof}
We only need to estimate the third term of $\interpol(v_h)$.  The required bound for other terms is given in \cite{lehrenfeld2018stabilized}.
 We make use of  the  following local trace inequality, cf.~\cite{Hansbo02,reusken2015analysis,guzman2018inf}:
\begin{equation}\label{trace}
\|v\|_{S\cap\G{n}{h}}\le c(h^{-\frac12}\|v\|_{S}+h^{\frac12}\|\nabla v\|_{S}),\quad ~~v\in H^1(S),~~S\in \mathcal{T}_h^{\Gamma},
\end{equation}
with some $c$ independent of $v$, $T$, $h$, and position of $\G{n}{h}$ in $S$.
We need interpolation properties of polynomials and their traces \cite{gross2015trace,reusken2015analysis}:
\begin{equation}\label{eq:interp}
\|v^e-v_I\|_{\O_\Gamma(\G{n}{h})}+h\|\nabla(v^e-v_I)\|_{\O_\Gamma(\G{n}{h})}\lesssim h^{m+1}\|v^e\|_{H^{m+1}(\O_\Gamma(\Gamma(t_n)))}\quad \text{for}~v\in H^{m+1}(\Gamma(t_n)).
\end{equation}
With the help of \eqref{growth}, \eqref{trace}, and \eqref{eq:interp} we estimate
\begin{align*}
	&\left|\int_{\G{n}{h}} (f(u^{n-1})-f(u^{n-1}_I)) v_h\,ds\right|\leq   L\int_{\G{n}{h}} \left|(u^{n-1}-u^{n-1}_I) v_h\right|ds\leq L\left\|{e^{n-1}}\right\|_{\G{n}{h}}\|v_h\|_{\G{n}{h}}\\
	&\lesssim
	h^{-\frac12}\left( \|e^{n-1}\|_{\O_\Gamma(\G{n}{h})}+h  \|\nabla e^{n-1}\|_{\O_\Gamma(\G{n}{h})}\right)\|v_h\|_{\G{n}{h}}\lesssim{}h^{-\frac12}\left( h^{m+1}\| u\|_{H^{m+1}(\O_\Gamma(\G{{n-1}}{}))}\right)\|v_h\|_{\G{n}{h}}\\
	&\lesssim{}h^{-\frac12}\left( h^{m+\frac32}\| u\|_{W^{m+1,\infty}(\Gs)}\right)\|v_h\|_{\G{n}{h}}\lesssim{}h^{m+1}\| u\|_{W^{m+1,\infty}(\Gs)}\|v_h\|_{\G{n}{h}}.
\end{align*}

\end{proof}

Now we are prepared to prove the main result of the paper. Let $u_h^0=u_I^0\in V_h^0$ be a nodal interpolant to
$u^0\in \O(\Gamma_h^0)$.

\begin{theorem}\label{Th2}
{  Assume \eqref{phi_h}--\eqref{phi_hbb}, \eqref{e:delta}, \eqref{eq:conddtkappa},
  \eqref{cond4},
  \eqref{cond2},
  \eqref{cond3}, \eqref{stab_beta}, and
  \eqref{cond5}.  Solution  $u$  to \eqref{transport} is such that $u\in W^{m+1,\infty}(\Gs)$.
  For  $u_h^n$, $n=1,\dots,N$, the finite element solution of \eqref{e:traceFEM1}, $u^n=u(t_n)$,
  and the error function $\err^n = u_h^n - u^n$  the following estimate holds:}
  \begin{equation}\label{FE_est1}
    \|\err^n\|^2_{\G{n}{h}}+{\Delta t}\sum_{k=1}^{n} \|\nabla_{\G{}{h}} \err^k \|^2_{\G{k}{h}}
    \lesssim \exp(c\, t_n) \norm{u}_{W^{m+1,\infty}(\Gs)}^2 (\Delta t^2+h^{2\min\{m,q\}}),
  \end{equation}
  with $c$ independent of $h$, $\Delta t$, $n$ and  the position of the surface in the background mesh.
\end{theorem}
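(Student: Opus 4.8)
The plan is to run the same energy argument as in the stability proof of Theorem~\ref{Th1}, but now applied to the error equation \eqref{e:err1} for the finite element part $e_h^n=u_I^n-u_h^n$, and to close the recursion with a discrete Gronwall inequality. Since $u_h^0=u_I^0$ we have $e_h^0=0$, so only the accumulated consistency and interpolation errors will enter. First I would test \eqref{e:err1} with $v_h=e_h^n$ and apply the polarization identity $(e_h^n-e_h^{n-1})e_h^n=\tfrac12\big((e_h^n)^2-(e_h^{n-1})^2+(e_h^n-e_h^{n-1})^2\big)$ to the discrete time-derivative term. The bilinear form is bounded below by the coercivity estimate \eqref{lower}, producing $\|\nabla_{\G{}{h}}e_h^n\|^2_{\G{n}{h}}$, $\stab\|\bn_h^n\cdot\nabla e_h^n\|^2$, and a negative $-\xi_h\|e_h^n\|^2$ contribution. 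The nonlinear difference $f(u_I^{n-1})-f(u_h^{n-1})$ is controlled pointwise by the Lipschitz bound \eqref{variation}, giving $|\eps^{-2}\int(f(u_I^{n-1})-f(u_h^{n-1}))e_h^n|\le \eps^{-2}L\,\tfrac12(\|e_h^{n-1}\|^2_{\G{n}{h}}+\|e_h^n\|^2_{\G{n}{h}})$. The choice \eqref{stab_beta}, $\beta_s\ge 2\xi_h+\eps^{-2}L+1$, is exactly what lets the $\beta_s$-term absorb both the $\xi_h\|e_h^n\|^2$ from coercivity and the $\eps^{-2}L\|e_h^n\|^2$ from the nonlinearity while leaving a positive $\Delta t\|e_h^n\|^2$ margin, just as in Theorem~\ref{Th1}.

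Next I would handle the two sources of geometric inconsistency. The norm $\|e_h^{n-1}\|^2_{\G{n}{h}}$ appearing on the right (from both the polarization identity and the nonlinear term) lives on the wrong surface; since $e_h^{n-1}\in V_h^{n-1}$, I would transfer it to $\G{n-1}{h}$ by the simplified bound \eqref{fund2simple}, at the price of an extra $\Delta t\,\stabold\|\bn_h^{n-1}\cdot\nabla e_h^{n-1}\|^2$ term. Defining the combined energy $E^n:=\|e_h^n\|^2_{\G{n}{h}}+2\Delta t\,\stab\|\bn_h^n\cdot\nabla e_h^n\|^2_{{\O}_h(\G{n}{h})}$, this extra term is absorbed into $E^{n-1}$, so the normal-stabilization contributions close the loop; this mirrors the bookkeeping of Theorem~\ref{Th1} and is the reason the stabilization term must be carried through the energy. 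For the right-hand side I would invoke Lemma~\ref{l_consist} and Lemma~\ref{l_interp} and split each product by Young's inequality: the $\|e_h^n\|$, $\|\nabla_{\G{}{h}}e_h^n\|$, and $\stab^{1/2}\|\bn_h^n\cdot\nabla e_h^n\|$ factors are absorbed into the corresponding coercive terms on the left (using that $2\Delta t$-weighted terms are available and that \eqref{cond3} matches the stabilization scaling), while the data factors accumulate as $\Delta t\,(\Delta t^2+h^{2m}+h^{2q})\,\norm{u}^2_{W^{m+1,\infty}(\Gs)}$, after using $W^{m+1,\infty}\hookrightarrow W^{2,\infty}$ to unify the norms in the two lemmas.

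This yields a recursion $E^n+\Delta t\|\nabla_{\G{}{h}}e_h^n\|^2_{\G{n}{h}}\le(1+c\Delta t)E^{n-1}+c\Delta t(\Delta t^2+h^{2m}+h^{2q})\norm{u}^2_{W^{m+1,\infty}(\Gs)}$, to which the discrete Gronwall inequality applies; summing over $k$, using $\sum_k\Delta t\le T$ and $E^0=0$, bounds the finite element part by $\exp(c\,t_n)(\Delta t^2+h^{2\min\{m,q\}})\norm{u}^2_{W^{m+1,\infty}(\Gs)}$, after noting $h^{2m}+h^{2q}\lesssim h^{2\min\{m,q\}}$. Finally I would pass from $e_h^n$ to the full error $\err^n=e^n+e_h^n$ by the triangle inequality, estimating the interpolation part $e^n=u^n-u_I^n$ on $\G{n}{h}$ through the local trace inequality \eqref{trace} together with the interpolation bounds \eqref{eq:interp}, which gives $\|e^n\|^2_{\G{n}{h}}+\Delta t\sum_k\|\nabla_{\G{}{h}}e^k\|^2\lesssim h^{2m}\norm{u}^2_{W^{m+1,\infty}(\Gs)}\le h^{2\min\{m,q\}}\norm{u}^2_{W^{m+1,\infty}(\Gs)}$, completing \eqref{FE_est1}.

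The main obstacle is the middle step: synchronizing the surface-transfer Lemma~\ref{lem2} with the Gronwall recursion so that the normal-stabilization terms at levels $n$ and $n-1$ balance. This is where conditions \eqref{e:delta}, \eqref{cond3}, and \eqref{cond4} are all genuinely used, and where the analysis departs from the stationary-surface case: on a fixed surface the term $\|e_h^{n-1}\|^2_{\G{n}{h}}$ would simply be $\|e_h^{n-1}\|^2_{\G{n-1}{h}}$, whereas here the mismatch between consecutive discrete surfaces forces the stabilization term into the energy quantity, and one must verify that the coefficient $\stabold\Delta t$ generated by \eqref{fund2simple} is dominated by the $2\stabold\Delta t$ already present in $E^{n-1}$.
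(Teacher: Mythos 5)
Your proposal follows essentially the same route as the paper's own proof: testing the error equation \eqref{e:err1} with (a multiple of) $e_h^n$, using the polarization identity, the coercivity bound \eqref{lower}, the Lipschitz bound \eqref{variation}, the surface-transfer estimate \eqref{fund2simple}, Young's inequality on the bounds of Lemmas~\ref{l_consist} and \ref{l_interp}, absorption via \eqref{stab_beta}, and a discrete Gronwall argument; the concluding triangle-inequality passage from $e_h^n$ to $\err^n$, which you spell out, is left implicit in the paper. The only slip is bookkeeping: after Young's inequality the left-hand side retains strictly less than $2\Delta t\,\stab\|\bn_h^n\cdot\nabla e_h^n\|^2_{{\O}_h(\G{n}{h})}$, so the energy $E^n$ should carry a stabilization coefficient in $[1,2)$ (the paper effectively uses $1$) rather than your $2$ --- a one-line fix that does not affect the argument.
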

\begin{proof}
Letting $v_h=2\Delta t e^n_h$ in \eqref{e:err1} gives
\begin{multline*}
(1+\beta_s\Delta{}t)\left(\|e_h^n\|^2_{\G{n}{h}}- \|e_h^{n-1}\|^2_{\G{n}{h}} +\|e_h^n-e_h^{n-1}\|^2_{\G{n}{h}}\right)+{2\Delta t} a_n(e_h^n,e_h^n)\\+2\eps^{-2}\Delta{}t\int_{\G{n}{h}} (f(u^{n-1}_I)-f(u^{n-1}_h))  e^n_h\,ds=2\Delta t(\interpol(e_h^n)+\consist(e_h^n)).
\end{multline*}
The nonlinear term is estimated using \eqref{variation}:
\[
2\int_{\G{n}{h}} (f(u^{n-1}_I)-f(u^{n-1}_h))  e^n_h\,ds\le{}2\int_{\G{n}{h}}L |e^{n-1}_h|  |e^n_h|\,ds
\leq{}L(\|e_h^{n-1}\|^2_{\G{n}{h}}+\|e_h^{n}\|^2_{\G{n}{h}}).
\]
Dropping out the third term, using the lower bound \eqref{lower} for $a_n$ and applying \eqref{fund2simple} to bound $ \|e_h^{n-1}\|^2_{\G{n}{h}}$  yields
\begin{multline}\label{est2}
(1+(\beta_s-{2\xi_h}-\eps^{-2}L)\Delta{}t)\|e_h^n\|^2_{\G{n}{h}}+{2\Delta t}\|\nabla_{\G{}{h}}e_h^n\|^2_{\G{n}{h}}
+2{\Delta t}{\stab}\|\bn_h^n \cdot \nabla e_h^n\|_{{\O}_h(\G{n}{h})}^2- 2\Delta t(\interpol(e_h^n)+\consist(e_h^n))\\
\le(1+(\beta_s+\eps^{-2}L)\Delta{}t)(1+c_1 \Delta t)
\|e_h^{n-1}\|_{\G{n-1}{h}}^2 + \stabold \Delta t \|\bn_h^{n-1} \cdot \nabla e_h^{n-1}\|_{{\O}_h(\G{n-1}{h})}^2.
\end{multline}
To estimate the interpolation and consistency terms, we apply Young's inequality to the right-hand sides of \eqref{est:consist} and \eqref{est_inter} yielding
\begin{align*}
2 \Delta t\consist(e_h^n)
&\le c\,\Delta t (\Delta t^2+h^{2q}) \norm{u}_{W^{2,\infty}(\Gs)}^2
+\frac{\Delta t}{2}\left(\|e_h^n\|_{\G{n}{h}}^2+\|\nabla_{\Gamma_h} e_h^n\|_{\G{n}{h}}^2 +\stab\|(\bn_h^n\cdot\nabla e_h^n)\|_{{\O}_h(\G{n}{h})}^2\right),\\
2 \Delta t\interpol(e_h^n)
& \le c\,\Delta t ~ h^{2m} \norm{u}_{W^{m+1,\infty}(\Gs)}^2
+\frac{\Delta t}{2}\left(\|e_h^n\|_{\G{n}{h}}^2+\|\nabla_{\Gamma_h} e_h^n\|_{\G{n}{h}}^2\right),
\end{align*}
with a constant $c$ independent of $h$, $\Delta t$, $n$ and of the position of the surface in the background mesh.
By substituting above estimates in \eqref{est2} we get
\begin{multline*}
(1+(\beta_s-{2\xi_h}-\eps^{-2}L-1)\Delta{}t)\|e_h^n\|^2_{\G{n}{h}}+{\Delta t}\|\nabla_{\G{}{h}}e_h^n\|^2_{\G{n}{h}}
+{\stab}{\Delta t}\|\bn_h^n \cdot \nabla e_h^n\|_{{\O}_h(\G{n}{h})}^2\\
\le(1+(\beta_s+\eps^{-2}L)\Delta{}t) (1+c_1 \Delta t)
\|e_h^{n-1}\|_{\G{n-1}{h}}^2 + \stabold \Delta t \|\bn_h^{n-1} \cdot \nabla e_h^{n-1}\|_{{\O}_h(\G{n-1}{h})}^2\\
 +c\Delta{}t\,\norm{u}_{W^{m+1,\infty}(\Gs)}^2 (\Delta t^2+h^{2q}+h^{2m}).
\end{multline*}
Using lower bound \eqref{stab_beta} for $\beta_s$ leads to
\begin{multline*}
\|e_h^n\|^2_{\G{n}{h}}+{\Delta t}\|\nabla_{\G{}{h}}e_h^n\|^2_{\G{n}{h}}
+{\Delta t}{\stab}\|\bn_h^n \cdot \nabla e_h^n\|_{{\O}_h(\G{n}{h})}^2
\le{}(1+c\Delta{}t)
\|e_h^{n-1}\|_{\G{n-1}{h}}^2 \\+ \Delta t\stabold  \|\bn_h^{n-1} \cdot \nabla e_h^{n-1}\|_{{\O}_h(\G{n-1}{h})}^2
+ c\Delta{}t\,\norm{u}_{W^{m+1,\infty}(\Gs)}^2 (\Delta t^2+h^{2q}+h^{2m}),
\end{multline*}
with a constant {$c$} independent of $h$, $\Delta t$, $n$ and of the position of the surface in the background mesh.
Applying the discrete Gronwall inequality proves the theorem.\\
\end{proof}

\section{Numerical experiments}\label{s:Numerics}
In this section, we present results of several numerical experiments, which illustrate the finite element method performance and analysis.
{\color{black} In examples we consider rigid surface motions or small oscillations of a surface, which is consistent with our assumption of small or area-preserving deformations.} All experiments are done using the finite element package DROPS \cite{DROPS}.
To build computation mesh, we use the combination of uniform subdivision into cubes with side length $h$ and the Kuhn subdivision of each cube into 6 tetrahedra. This provides us with a shape regular bulk triangulation $\T_h$.
The temporal grid is uniform in all experiments, $t_n=n\Delta t$ with $\Delta t=\frac{T}{N}$.
We use piecewise linear bulk finite element space $V_h$ (e.g., \eqref{eq:Vh} with $m=1$) for both finite element level set function and for the definition of test and trial spaces in \eqref{eq:testtrial}. This leads to geometry approximation \eqref{phi_h} with $q=1$,

{\bf Example 1.} In the first example, we consider the Allen--Cahn equation on a  sphere moving with constant velocity
$\mathbf{w}=(2,0,0)^T$.
The corresponding level set function is given by
\begin{equation}
({x}-{x}_0(t))^2+({y}-{y}_0(t))^2+({z}-{z}_0(t))^2=1,
\end{equation}
with the center $\mathbf{x}_0(t)=(x_0,y_0,z_0)^T=\mathbf{w}t$.
We consider the Allen--Cahn equation with nonzero right hand side term:
\begin{equation}\label{e:examp1}
\dot{u}+(\mathrm{div}_{\Gamma} \mathbf{w}) u-\Delta_{\Gamma} u-\eps^{-2} f(u)= g(\bx) \quad\hbox{on } \Gamma(t)
\end{equation}
such that  solution is known explicitly:
$$
u=\frac{1}{2}(1-0.8 e^{-40t})\left(\sqrt{\frac{3}{\pi}}(y-y_0)+1\right).
$$

We set $\eps=0.1$, $T=0.1$. {\color{black} According to \eqref{stab_beta} we need $\beta_s$ of order $\eps^{-2}$, so we set $\beta_s=0.2\eps^{-2}$ in all further examples. We observed that in practice the stabilization term cannot be completely omitted without server restrictions on the time step. We do not study however  the optimal choice of parameter $\beta_s$.} The computational domain is $\Omega=[-2,2]^3$; it contains $\G{}{}(t)$ (and $\G{}{h}(t)$) at all times $t \in [0,T]$.
The error is measured in the $L^2(0,T;H^1(\G{}{h}(t)))$ and $L^\infty(0,T;L^2(\G{}{h}(t)))$ surface norms. The former is computed  with the help of the composite trapezoidal quadrature rule in time and the latter is approximate by $\max_{n=1,..,N} \norm{\cdot}_{L^2(\G{n}{h}(t))}$.
Table~\ref{tab:table1} shows the results of experiment.
To study the convergence rates, we apply successive refinements in space and in time.
The ``experimental orders of convergence''(\eoc{}) in space and time are then defined as \eoc{} $=  \log_2(e_b/e_a)$, where $e_a$ and $e_b$ are corresponding error norms. In particular, \eoc{x} stands for the convergence order in space, when  time is fixed. Likewise,  \eoc{tt} shows convergence order in time per two refining steps; and \eoc{xtt} indicates the order for the simultaneous space and time refinement.
From Table~\ref{tab:table1}, we can see that in  $L^2(0,T;H^1(\G{}{h}(t))$ norm the error converges with the first order both in space and time (this agrees with our analysis), while the $L^\infty(0,T;L^2(\G{}{h}(t))$ norm of the error reduces approximately four times if the mesh size is reduced two times and the time step is reduced four times.
The observed rates are optimal for our choice of the finite element space and time-stepping scheme.

\begin{table}
\caption{$L^2(H^1)$- and $L^{\infty}(L^2)$-norm error in Experiment~1 with backward Euler.}\label{tab:table1}
\vspace{-0.2cm}
\begin{center}
  \footnotesize
  \begin{tabular}{@{~}l@{~~}r@{~~}r@{~~}r@{~~}r@{~}r@{~}}
    \toprule
    \multicolumn{5}{c}{$L^2(H^1)$-norm of the error} \\
    \midrule
    & $h=1/2$
    &$h=1/4$
    &  $h=1/8$
    & $h=1/16$ & \eoc{tt} \\
    \midrule
    {$\Delta t=T/64$}
    &\numbf{0.533131}	
    &\num{0.515529}	
    &\num{0.51057}	
    &\num{0.509261} & --- \\
    {$\Delta t=T/256$}
    &\num{0.25401}	
    &\numbf{0.192922}
    &\num{0.173274}	
    &\num{0.167964} & \numQ{1.60}\\
    {$\Delta t=T/1024$}
    &\num{0.198474}
    &\num{0.107551}	
    &\numbf{0.065671}
    &\num{0.0500585} &\numQ{1.74} \\
    {$\Delta t=T/4096$}
    &\num{0.193175}
    &\num{0.0982567}
    &\num{0.0498987}
    &\numbf{0.0266914} & \numQ{0.907276}\\
    \midrule
     \multicolumn{1}{r}{\eoc{x}}
   & {---}
    &\numQ{0.975276}
    &\numQ{0.977549}
    &\numQ{0.90265755}\\
    \midrule
     \multicolumn{1}{r}{\underline{\eoc{xtt}}}
   & {---}
    &\numQ{1.4664724}
    &\numQ{1.5546893}
    &\numQ{1.298903}\\
    \bottomrule
  \end{tabular}

  \begin{tabular}{@{~}l@{~~}r@{~~}r@{~~}r@{~~}r@{~}r@{~}}
    \toprule
    \multicolumn{5}{c}{$L^{\infty}(L^2)$-norm of the error} \\
    \midrule
    & $h=1/2$
    & $h=1/4$
    & $h=1/8$
    & $h=1/16$ & \eoc{tt} \\
    \midrule
    $\Delta t=T/64$
    &\numbf{0.841768}
    &\num{0.853377}
    &\num{0.856291}
    &\num{0.857016} & -- \\
    $\Delta t=T/256$
    &\num{0.238577}
    &\numbf{0.240204}
    &\num{0.242982}
    &\num{0.243812} & \numQ{1.813553}\\
    $\Delta t=T/1024$
    &\num{0.0949171}
    &\num{0.06082}
    &\numbf{0.0607511}
    &\num{0.0614854} & \numQ{1.9874627}\\
    $\Delta t=T/4096$
    &\num{0.0880713}
    &\num{0.025167}
    &\num{0.01528}
    &\numbf{0.0152065} & \numQ{2.0154989}\\
    \midrule
    \multicolumn{1}{r}{\eoc{x}}
    &{---}
    &\numQ{1.807134}
    &\numQ{0.7198887}
    &\numQ{0.00690897}\\
    \midrule
    \multicolumn{1}{r}{\underline{\eoc{xtt}}}
    &{---}
    &\numQ{1.8091624}
    &\numQ{1.98328}
    &\numQ{1.99817258}\\
    \bottomrule
  \end{tabular}

\end{center}
\vspace*{-0.25cm}
\end{table}

{\bf Example 2.}  We now consider the Allen-Cahn equation on a sphere of varying radius $R(t)$. The level set function of the sphere is given by
$$
\phi=x^2+y^2+z^2-R(t)^2.
$$
It defines a pulsation of the sphere. We are interested if the numerical solution approximation a geodesic curvature type flow defined by \eqref{e:dynDistn}. The phase separation curve $C(t)$  is initially a circle with radius $r_0<R(0)$. Due
to the axial symmetry, for all $t\in[0,t_{crit})$, $C(t)$ is a circle of radius $r(t)$,
 where  $r(t)$ solves the ODE (cf. Appendix~\ref{A1})
\begin{equation}\label{e:ODE}
r_t=\frac{r^2-R^2}{rR^2}+\frac{r}{R}R_t.
\end{equation}

Our reference solution is computed by the direct integration of   \eqref{e:ODE} with a higher order Runge--Kutta method.
We next solve
the Allen-Cahn equation on the sphere and compare the radius of the zero level-set
of the numerical solution with the reference solution.  In this test, we set
$R(t)=\frac{1}{\sqrt{1+\delta\cos nt }}$, with $\delta=\frac{1}{6}$ and $n=16\pi$.
We choose the final time $T=0.125$ and $\delta t\approx 3.9063\times 10^{-6}$.
{\color{black}We set $u_0=\mathrm{tanh}(d_{C_0}(x)/\varepsilon)$ where $d_{C_0}(x)$ is the signed (geodesic) distance function to the circle $C(0)$
with radius $r_0$ on the initial sphere. We compute the numerical solution for several values of  $\varepsilon=0.4,0.2,0.1$.}
The {\color{black} averaged } radius evolution recovered from the
finite element solution to the Allen--Cahn equation is shown in Figure~\ref{fig:meanCurvature2}.
We can see that results are  in a good agreement with the reference solution, {\color{black} and converge to the true solution for decreasing $\eps$.}

\begin{figure}[ht!]
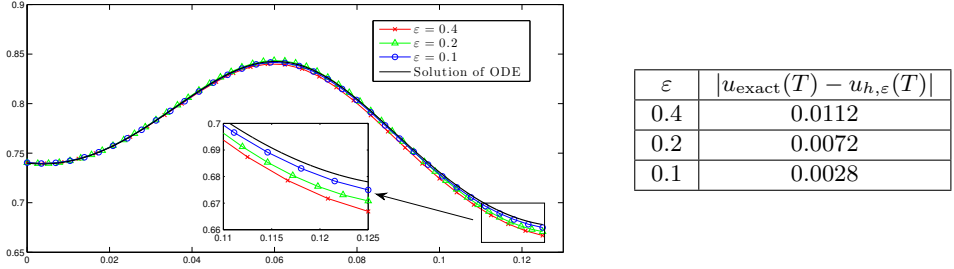

 \begin{subfigure}{0.6\linewidth}
 \begin{overpic}[width=\textwidth]{example2nn.eps}
  \end{overpic}
  \end{subfigure}
  \begin{subfigure}{0.39\linewidth} {\color{black}
		\centering\small
		\begin{tabular}[1.2]{|c|c|}
			\hline
			$\eps$ & $|u_{\rm exact}(T)-u_{h,\eps}(T)|$  \\
			\hline
			0.4 & 0.0112 \\
			\hline
			0.2 & 0.0072 \\
			\hline
			0.1  & 0.0028 \\
			\hline
		\end{tabular}
}
	\end{subfigure}%
 \caption{Example~2: Approximation to a mean curvature flow for varying $\eps$. The error between the true and numerical solution is shown at final time $T=0.125$.}\label{fig:meanCurvature2}
\end{figure}

{\bf Example 3.} In this example, we consider the surface Allen-Cahn equation~\eqref{transport}  on a deforming manifold of a general shape.
The initial manifold is given ({as in \cite{Dziuk88}}) by
$$
\Gamma(0)=\{ \, \bx\in \Bbb{R}^3~|~ (x -z^2)^2+y^2+z^2=1\, \}
$$
The velocity field that deforms the surface is
$$\mathbf{w}(\bx,t)=\big(10 x \cos(100t),20 y \sin(100t),20 z \cos(100t)\big)^T.$$
In this example, we choose a slightly different $f(u)=u^2(1-u^2)$ so that solution is in the interval $[0,1]$.  The initial function $u_0$ is defined in each node by a random number from $[0,1]$ using the uniform distribution.

In this example, we set $a=1$, $\eps=0.01$, $T=0.04$ and $\Omega=[-2,2]^3$. We use the same bulk triangulation and spaces as in example 1 and  $\Delta t=T/1024$.
Figure~\ref{fig:dziuk} shows  the (approximated) manifold and snapshots of the discrete solution $u_h$ at several time instances. In general, we note that the evolution of $u$ in this example is similar to what is found on the  stationary surface $\Gamma(0)$ with surface FEM in \cite{dziuk2007surface}: the fast decomposition phase follows by the formation of phases with a narrow transition region (diffuse interface) between phases. As expected for the mean curvature motion, the  interface tends to straightening, second phase regions are rounding and shrinking.

\begin{figure}[ht!]
 \centering

   \vspace{1mm}
 \begin{overpic}[width=0.45\textwidth]{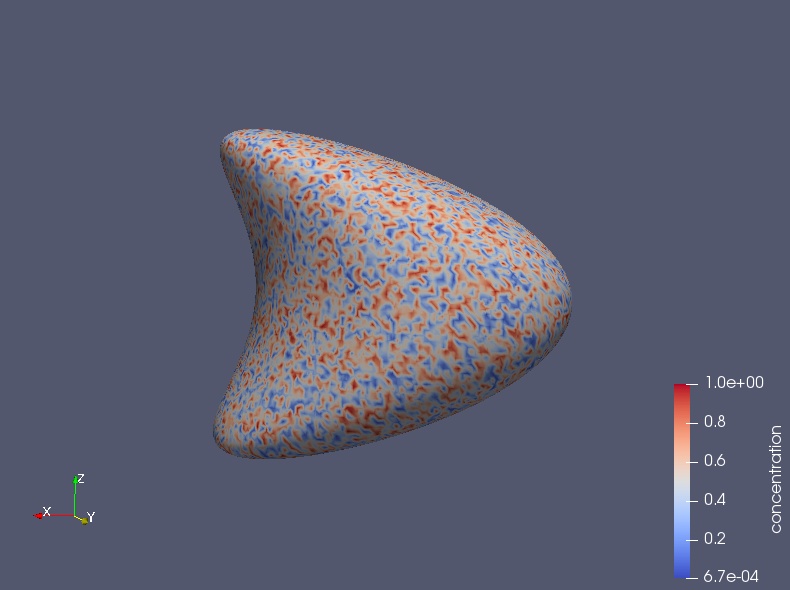}
  \end{overpic}
   \begin{overpic}[width=0.45\textwidth]{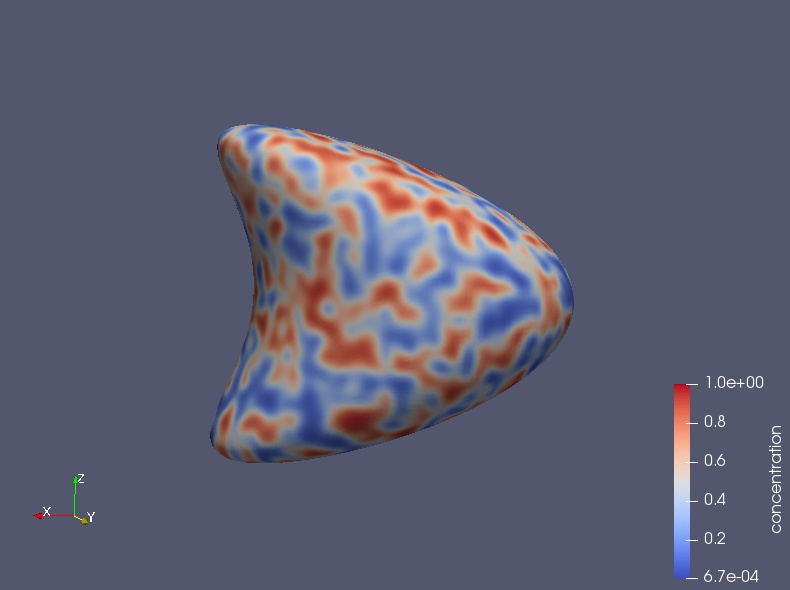}
  \end{overpic}
  \vspace{1mm}

   \begin{overpic}[width=0.45\textwidth]{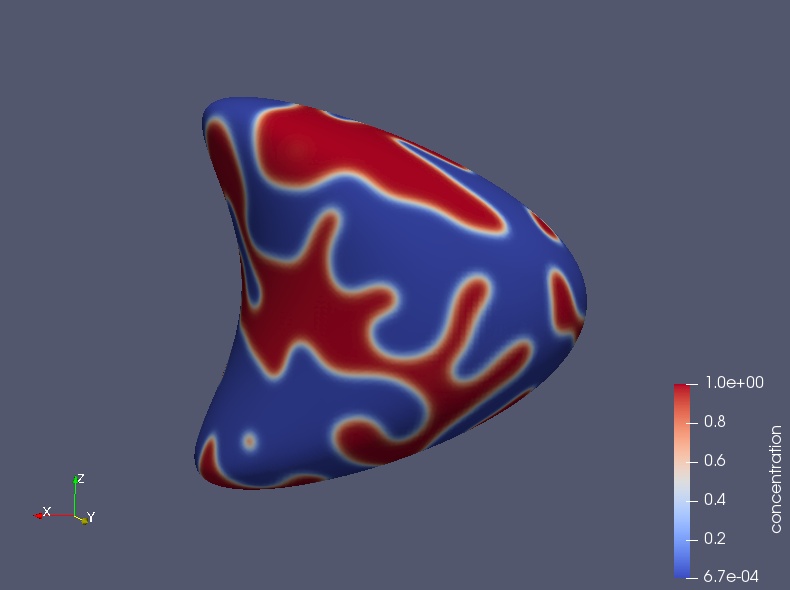}
  \end{overpic}
   \begin{overpic}[width=0.45\textwidth]{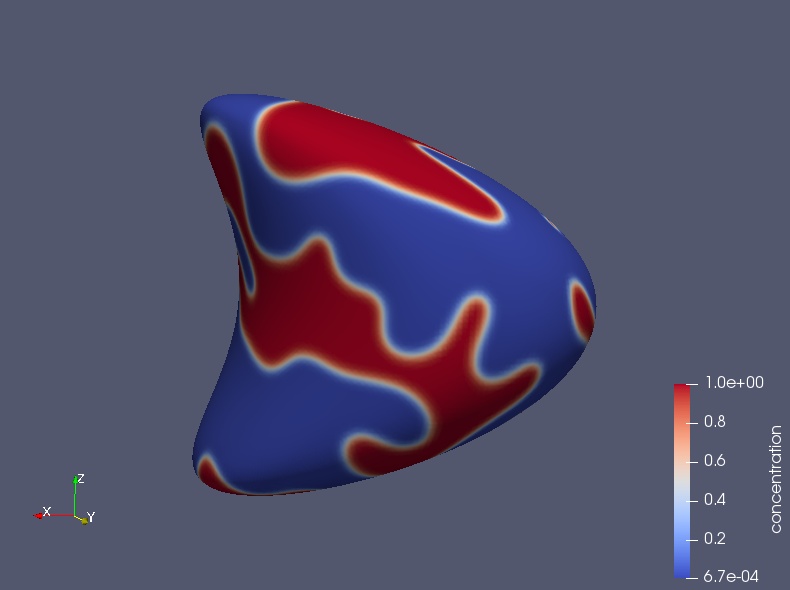}
  \end{overpic}

  \vspace{1mm}
   \begin{overpic}[width=0.45\textwidth]{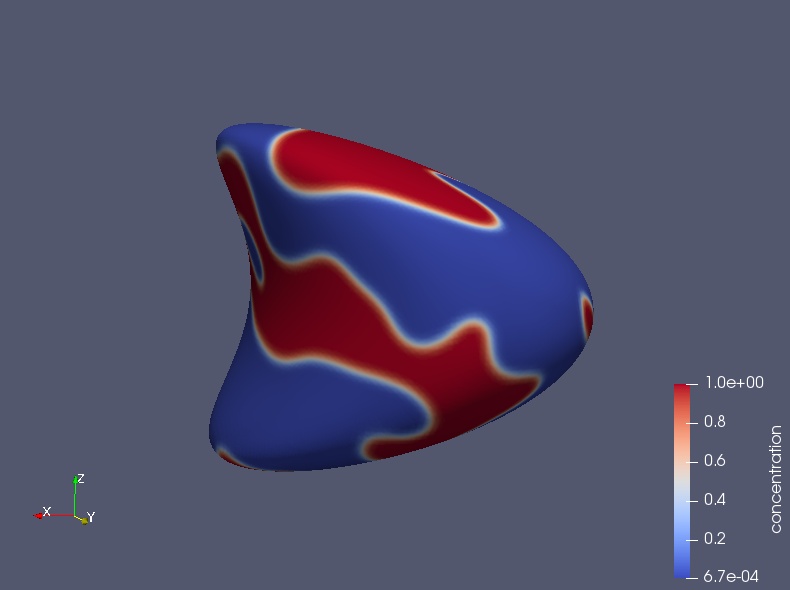}
  \end{overpic}
   \begin{overpic}[width=0.45\textwidth]{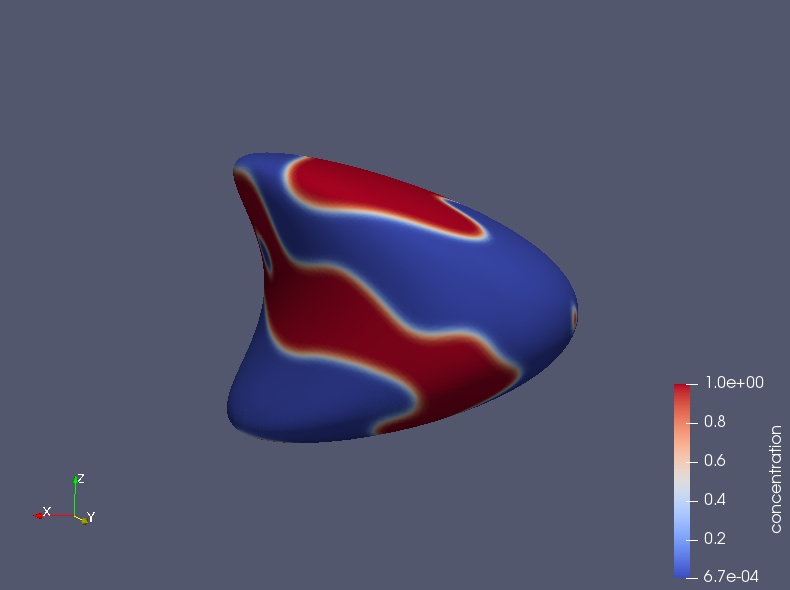}
  \end{overpic}
 \caption{Example~3: solutions for $t=k\Delta t$ with $k=0,32,256,512,768,1024$.}\label{fig:dziuk}
\end{figure}

\section*{Appendix} \label{A1}
{\color{black}
\begin{figure}[ht!]
 \centering
 \begin{overpic}[width=0.35\textwidth]{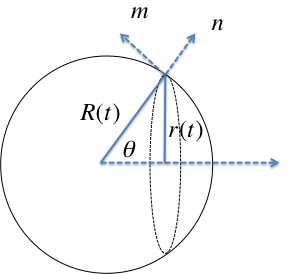}
  \end{overpic}
 \caption{Illustration of quantities in \eqref{vel}.}\label{fig:circle}
\end{figure}
We give a brief derivation of \eqref{e:ODE}.
On a sphere of a varying radius $R(t)$  consider a  circle $C(t)$ of radius $r(t)$ (see Figure~\ref{fig:circle}).
Assume the circle evolves according to the geodesic curvature flow given by \eqref{e:dynDistn}.
The geodesic curvature can be computed as the curvature of the circle projection on the tangential planes:
$$
\kappa_g=\frac{1}{r}\cos\theta=\frac{\sqrt{R^2-r^2}}{rR}.
$$
This determines the co-normal velocity of $C(t)$, while the normal velocity is given by ${R}_t$.
Therefore, the material velocity of the points on $C(t)$ is given by
\begin{equation}\label{vel}
-\frac{\sqrt{R^2-r^2}}{rR} \mathbf{m}+{R}_t \mathbf{n}.
\end{equation}
Then the time derivative of the radius $r$ can be explicitly computed to be
$$
r_t= -\frac{\sqrt{R^2-r^2}}{rR} \cos\theta+{R}_t \sin\theta=\frac{r^2-R^2}{r R^2}+\frac{r}{R}R_t.
$$
}

\section*{Acknowledgement}
{X.X. acknowledges the financial support by NSFC project under Grant 11971469 and by the National Key R\&D Program of China under Grant 2018YFB0704304 and Grant 2018YFB0704300}.
M.O. was partially supported by  NSF through the Division of Mathematical Sciences grants DMS-2011444 and DMS/NIGMS-1953535.

\vspace*{-0.2cm}

\bibliography{literatur}{}
\bibliographystyle{siam}
\end{document}